\numberwithin{equation}{section}
\def\fnum@figure{Fig.\thefigure}
\theoremstyle{definition}
\newtheorem*{assumption*}{Assumption}
\newtheorem{remark}{Remark}
\numberwithin{remark}{section}
\newtheorem*{remark*}{Remark}
\newtheorem{lemma}{Lemma}
\numberwithin{lemma}{section}
\newtheorem{theorem}{Theorem}
\numberwithin{theorem}{section}
\let \mr=\mathrm
\begin{document}

\begin{frontmatter}

%% Title, authors and addresses

%% use the tnoteref command within \title for footnotes;
%% use the tnotetext command for the associated footnote;
%% use the fnref command within \author or \address for footnotes;
%% use the fntext command for the associated footnote;
%% use the corref command within \author for corresponding author footnotes;
%% use the cortext command for the associated footnote;
%% use the ead command for the email address,
%% and the form \ead[url] for the home page:
%%
%% \title{Title\tnoteref{label1}}
%% \tnotetext[label1]{}
%% \author{Name\corref{cor1}\fnref{label2}}
%% \ead{email address}
%% \ead[url]{home page}
%% \fntext[label2]{}
%% \cortext[cor1]{}
%% \address{Address\fnref{label3}}
%% \fntext[label3]{}

%% use optional labels to link authors explicitly to addresses:
%% \author[label1,label2]{<author name>}
%% \address[label1]{<address>}
%% \address[label2]{<address>}

\title{Analysis of SDFEM on Shishkin triangular meshes and hybrid meshes for problems with characteristic layers\tnoteref{label3}}
%\tnoteref{label0}}
%\tnotetext[label0]{The project is supported by
%NSF of China(10971164).}
\tnotetext[label3]{This research was partly supported by NSF of China  (
Grant Nos. 11501335,  11401349 and 11501334) and NSF of Shandong Province (Grant Nos. BS2014SF008 and ZR2015FQ014).}

\author[label1] {Jin Zhang\corref{cor1}}
\author[label2] {Xiaowei Liu\fnref {cor2}}
\cortext[cor1] {Corresponding author:   jinzhangalex@hotmail.com }
\fntext[cor2] {Email: xwliuvivi@hotmail.com }
%\ead{xwliuvivi@hotmail.com}
\address[label1]{School of Mathematical Sciences, Shandong Normal University,
Jinan 250014, China}
\address[label2]{College of Science, Qilu University of Technology, Jinan 250353, China.}

\begin{abstract}
In this paper,  we analyze the  streamline diffusion finite element method (SDFEM) for  a model singularly perturbed convection-diffusion equation on  a Shishkin triangular mesh and  hybrid meshes.  
Supercloseness property of $u^I-u^N$ is obtained, where  $u^I$ is  the interpolant of the solution $u$ and  $u^N$ is the SDFEM's solution.
The analysis depends on novel integral inequalities for the diffusion and convection parts in the bilinear form.
Furthermore, analysis on    hybrid meshes shows that bilinear elements should be recommended for  the exponential layer,  not  for the characteristic layer.  Finally, numerical experiments support these theoretical results.

%Supercloseness of $u^I-u^{N}$  is analyzed, where $u^I$ is the interpolant of the solution $u$ from the finite element space of
%piecewise linears, $u^{N}$ is the SDFEM solution. The according error bounds are different
%from ones of the SDFEM on Shishkin rectangular meshes, as 
%interprets the differences between the SDFEM on these two kinds of meshes.
%Moreover, these bounds allow the construction of a simple postprocessing that yields a more accurate solution.
%Finally, numerical experiments support these theoretical results.
%\\
%~\\
%{\color{blue}\textbf{First estimate $\Vert u^{I}-u^{N } \Vert_{\varepsilon}$; Then estimates $\Vert u-u^{N} \Vert_{L^{\infty}(\Omega_{s})}$; Finally, make use of 
%$\Vert u^{I}-u^{N} \Vert_{\Omega\setminus\Omega_{s}}\le C\varepsilon \ln^{2}N \Vert u^{I}-u^{N} \Vert_{\varepsilon;\Omega\setminus\Omega_{s}}$ and $\Vert u-u^{N} \Vert_{L^{\infty}(\Omega_{s}) }$ to derive $\Vert u-u^{N} \Vert_{\Omega}$      }}
\end{abstract}

\end{frontmatter}
%\title{Analysis of the SDFEM in streamline-diffusion norm for problems with exponential layers}
%%\tnoteref{label0}}
%%\tnotetext[label0]{The project is supported by
%%NSF of China(10971164).}
%
%
%
%\author[label1] {Xiaowei Liu\fnref {cor2}}
%\author[label2] {Jin Zhang\corref{cor1}}
%\cortext[cor1] {Corresponding author:   jinzhangalex@hotmail.com }
%\fntext[cor2] {Email: xwliuvivi@hotmail.com }
%%\ead{xwliuvivi@hotmail.com}
%\address[label1]{Department of Mathematics, Tongji University, Shanghai 200092, China}
%\address[label2]{School of Mathematical Sciences, Shandong Normal University,
%Jinan 710049, China}

%%%%%%%%%%%%%%%%%%%%%%%%%%%%%%%%%%%%%%%%%%%%%%
%
%
%%%%%%%%%%%%%%%%%%%%%%%%%%%%%%%%%%%%%%%%%%%%%%
\section{Introduction}
We consider the singularly perturbed boundary value problem
 \begin{equation}\label{eq:model problem}
 \begin{array}{rcl}
-\varepsilon\Delta u+b  u_x+cu=f & \mbox{in}& \Omega=(0,1)^{2},\\
 u=0 & \mbox{on}& \partial\Omega ,
 \end{array}
 \end{equation}
where $\varepsilon\ll |b|$ is a small positive parameter, the functions $b(x,y)$, $c(x,y)$ and $f(x,y)$ are supposed sufficiently smooth. We also assume  $$b(x,y)\ge \beta>0, \, c(x,y)-\frac{1}{2}b_x(x,y)\ge \mu_{0}>0\quad \text{on $\bar{\Omega},$} $$
where $\beta$ and $\mu_{0}$ are some constants. The solution of \eqref{eq:model problem} typically has
an exponential layer of width
$O(\varepsilon\ln(1/\varepsilon) )$ near the outflow boundary at $x=1$ and two characteristic (or parabolic) layers of width $O(\sqrt{\varepsilon}\ln(1/\varepsilon))$ near the characteristic boundaries at $y=0$
and $y=1$.
\par
Because of the presence of layers,  standard finite element methods suffer from nonphysical oscillations 
%and do not give accurate approximations of \eqref{eq:model problem} 
unless meshes are taken sufficiently fine which are useless for practical purposes. Thus, stabilized methods and/or a priori adapted meshes (see \cite{Stynes:2005-Steady, Roo1Sty2Tob3:2008-Robust}) are widely used in order to get discrete solutions with satisfactory
stability and accuracy.  Among them,  the streamline diffusion finite element method (SDFEM)  \cite{Hugh1Broo2:1979-multidimensional} combined with the Shishkin mesh \cite{Shishkin:1990-Grid} presents good numerical performances and has been widely studied, see \cite{Styn1Tobi2:2003-SDFEM,Fra1Lin2Roo3:2008-Superconvergence,Fra1Kel2Sty3:2012-Galerkin,Styn1Tobi2:2008-Using}.

%on Shishkin triangular meshes

In this work, we will analyze  supercloseness property of the SDFEM  for problem \eqref{eq:model problem}. Here ``supercloseness''  means the convergence order  of $u^I-u^{N}$ in some norm is greater than one  of $u-u^I$. This property  in the case of rectangular meshes has been analyzed in  \cite{Styn1Tobi2:2003-SDFEM,Fra1Lin2Roo3:2008-Superconvergence} by means of integral identities \cite{Lin1Yan2Zho3:1991-rectangle} and it is helpful to 
derive optimal $L^2$ estimates,   $L^{\infty}$ bounds and  postprocessing procedures. Unfortunately, on triangular meshes few results of supercloseness property could be found up to now. In this article, we present it in Theorem \ref{eq: uI-uN varepsilon} by means of novel integral inequalities, i.e., Lemmas \ref{lem:superconvergence-diffusion-term} and \ref{lem:superconvergence-convection-term}. 
%However, up to now, supercloseness property  on triangular meshes is still not available. In this article, we obtain this property,  i.e., Theorem \ref{eq: uI-uN varepsilon},  by novel integral inequalities,  i.e., Lemmas \ref{lem:superconvergence-diffusion-term} and \ref{lem:superconvergence-convection-term}. 
Furthermore, the SDFEM is analyzed on Shishkin hybrid  meshes
which consist of rectangles and triangles.  Theorem \ref{theorem:superconvergence-Hybrid mesh-CL} shows that rectangles are strongly recommended for the exponential layer and  not necessary for the characteristic layer.

 Here is the outline of this article. In \S 2 we give some a priori information for the solution of \eqref{eq:model problem}, then introduce a Shishkin mesh   and a streamline diffusion finite element method on the  mesh. In \S 3 we present integral inequalities and the interpolation errors.
In \S 4 we analyze the supercloseness property on the Shishkin triangular mesh. In \S 5 we obtain supercloseness property again on hybrid meshes. Finally, some numerical results are presented in \S 6.

Throughout the article, the standard notations for the Sobolev spaces
and norms will be used; and generic constants $C$, $C_i$ are independent of
$\varepsilon$ and $N$.
An index will be attached to indicate an inner product or a norm on a subdomain $D$,
for example, $(\cdot, \cdot)_{D}$ and $\Vert \cdot \Vert_{D}$.

\section{Regularity results, Shishkin meshes and the SDFEM}
\subsection{Regularity results}
As mentioned before the solution $u$ of \eqref{eq:model problem} possesses an exponential layer at $x=1$ and two characteristic layers at $y=0$ and $y=1$. For our later analysis we shall  make the following assumption.
\newtheorem{assumption}[theorem]{Assumption}
\begin{assumption}\label{assumption-regularity}
The solution $u$ of \eqref{eq:model problem} can be decomposed as
\begin{subequations}
\begin{equation}\label{eq:(2.1a)}
u=S+E_{1}+E_{2}+E_{12},\quad \forall (x,y)\in\bar{\Omega}.
\end{equation}
For $0\le i+j \le 3$, the regular part satisfies
\begin{equation}\label{eq:(2.1b)}
\left|\frac{\partial^{i+j}S}{\partial x^{i}\partial y^{j}}(x,y) \right|\le C,
\end{equation}
while for  $0\le i+j \le 3$, the layer terms  satisfy
\begin{equation}\label{eq:(2.1c)}
\left|\frac{\partial^{i+j}E_{1}}{\partial x^{i}\partial y^{j}}(x,y) \right|\le C\varepsilon^{-i}e^{-\beta(1-x)/\varepsilon},
\end{equation}
\begin{equation}\label{eq:(2.1d)}
\left|\frac{\partial^{i+j}E_{2}}{\partial x^{i}\partial y^{j}}(x,y) \right|\le C\varepsilon^{-j/2}(e^{-y/\sqrt{\varepsilon}}+e^{-(1-y)/\sqrt{\varepsilon}}),
\end{equation}
and
\begin{equation}\label{eq:(2.1e)}
\quad\quad\quad
\left|\frac{\partial^{i+j}E_{12}}{\partial x^{i}\partial y^{j}}(x,y) \right|\le C\varepsilon^{-(i+j/2)}e^{-\beta(1-x)/\varepsilon}
(e^{-y/\sqrt{\varepsilon}}+e^{-(1-y)/\sqrt{\varepsilon}}).
\end{equation}
\end{subequations}
\end{assumption}
\begin{remark}
In \cite{Kell1Styn2:2005-Corner,Kell1Styn2:2007-Sharpened} Kellogg and Stynes presented sufficient compatibility conditions on $f$ for constant functions $b$, $c$ that ensure
the existence of \eqref{eq:(2.1a)}--\eqref{eq:(2.1e)}.
\end{remark}

\subsection{Shishkin meshes}
 When discretizing \eqref{eq:model problem}, first we divide the domain $\Omega$  into four(six) subdomains as $\bar{\Omega}=\Omega_{s}\cup\Omega_{x}\cup\Omega_{y}\cup\Omega_{xy}$(see Fig. \ref{fig:Shishkin mesh}), where
\begin{align*}
&\Omega_{s}:=\left[0,1-\lambda_{x}\right]\times\left[\lambda_{y},1-\lambda_{y}\right],&&
\Omega_{y}:=\left[0,1-\lambda_{x}\right]\times\left(\left[0,\lambda_{y}\right]
\cup\left[1-\lambda_{y},1\right]
\right),\\
&\Omega_{x}:=\left[ 1-\lambda_{x},1 \right]\times\left[\lambda_{y},1-\lambda_{y}\right],&&
\Omega_{xy}:=\left[ 1-\lambda_{x},1 \right]\times\left(\left[0,\lambda_{y}\right]
\cup\left[1-\lambda_{y},1\right]
\right).
\end{align*}
Two parameters $\lambda_x$ and $\lambda_y$ are  used here for mesh transition from coarse to fine and  are defined by
\begin{equation*}
\lambda_{x}:=\min\left\{ \frac{1}{2},\rho\frac{\varepsilon}{\beta}\ln N \right\} \quad \mbox{and} \quad
\lambda_{y}:=\min\left\{
\frac{1}{4},\rho\sqrt{\varepsilon}\ln N \right\}.
\end{equation*}
For technical reasons, we set $\rho=2.5$. Moreover, we  assume $\varepsilon\le \min\{ N^{-1}, \ln^{-6} N \}$ and 
$$
\lambda_{x}=\rho\varepsilon\beta^{-1}\ln N\le \frac{1}{2}\quad \text{and}\quad \lambda_{y}=\rho\sqrt{\varepsilon}\ln N\le 
\frac{1}{4}
$$
as is typically the case for \eqref{eq:model problem}.

%we use a piecewise uniform mesh --- a so-called
% \textit{Shishkin} mesh ---with $N$ mesh intervals in both $x-$ and $y-$ direction which
% condenses in the layer regions. For this purpose we define the two mesh transition parameters
%\begin{equation*}
%\lambda_{x}:=\min\left\{ \frac{1}{2},\rho\frac{\varepsilon}{\beta}\ln N \right\} \quad \mbox{and} \quad
%\lambda_{y}:=\min\left\{
%\frac{1}{4},\rho\sqrt{\varepsilon}\ln N \right\}.
%\end{equation*}
%where $\rho$ will be defined later (see Remark \ref{remark: layer functions}).
%\begin{assumption}\label{assumption: varepsilon-N}
%We assume in our analysis that $\varepsilon\le N^{-1}$. Furthermore we assume that
%$\lambda_{x}=\rho\varepsilon\beta^{-1}\ln N$ and $\lambda_{y}=\rho\sqrt{\varepsilon}\ln N$ as otherwise $N^{-1}$ is exponentially small compared with $\varepsilon$.
%\end{assumption}
%\par
%The domain $\Omega$ is dissected into four(six) parts as $\Omega=\Omega_{s}\cup\Omega_{1}\cup\Omega_{2}\cup\Omega_{12}$(see Fig.\ref{fig:Shishkin mesh}), where
%\begin{align*}
%&\Omega_{s}:=\left[0,1-\lambda_{x}\right]\times\left[\lambda_{y},1-\lambda_{y}\right],&&
%\Omega_{2}:=\left[0,1-\lambda_{x}\right]\times\left(\left[0,\lambda_{y}\right]
%\cup\left[1-\lambda_{y},1\right]
%\right),\\
%&\Omega_{1}:=\left[ 1-\lambda_{x},1 \right]\times\left[\lambda_{y},1-\lambda_{y}\right],&&
%\Omega_{12}:=\left[ 1-\lambda_{x},1 \right]\times\left(\left[0,\lambda_{y}\right]
%\cup\left[1-\lambda_{y},1\right]
%\right).
%\end{align*}

\begin{figure}
\begin{minipage}[t]{0.5\linewidth}
\centering
\includegraphics[width=2.5in]{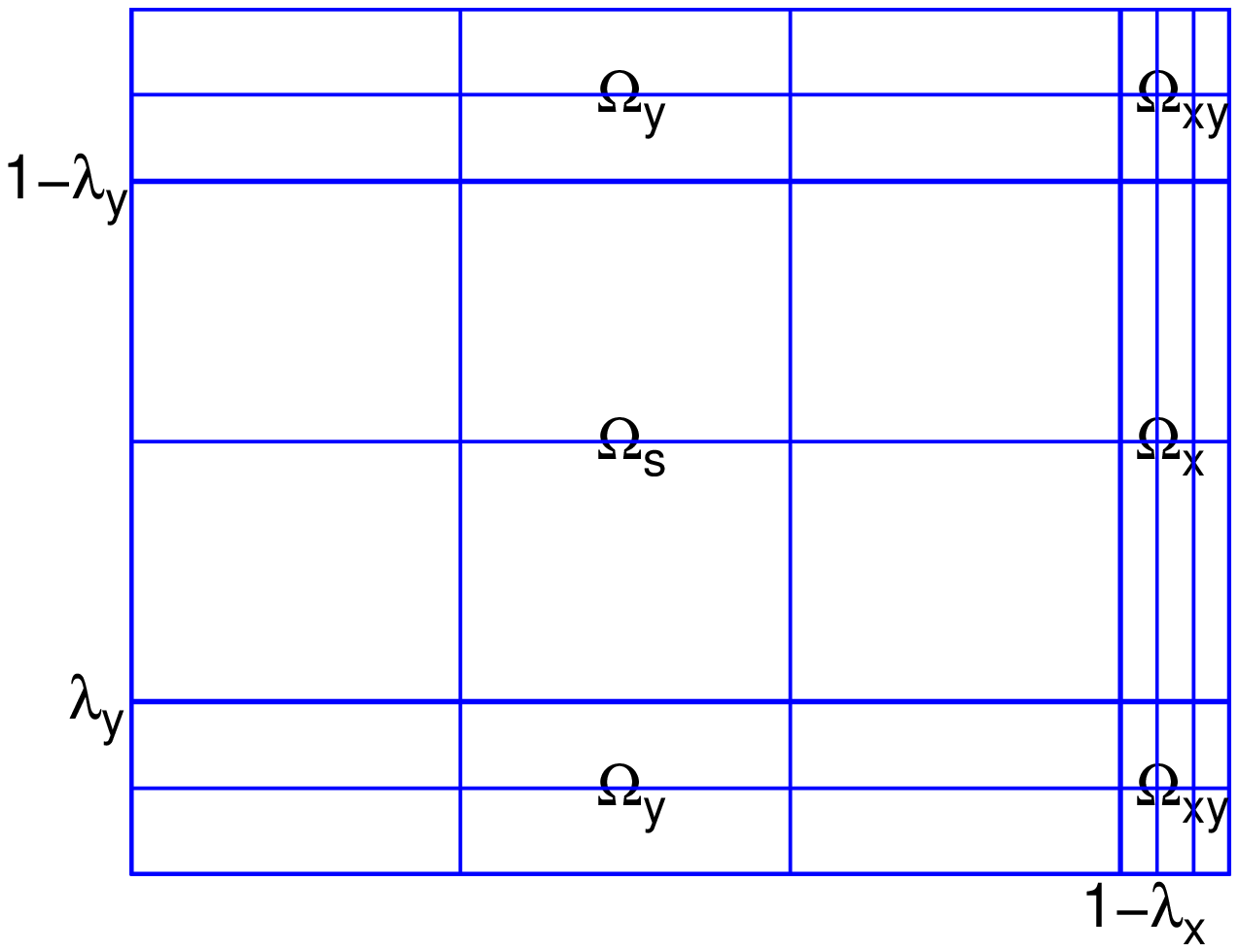}
\caption{Dissection of $\Omega$ and triangulation $\mathcal{T}_{N}$.}
\label{fig:Shishkin mesh}
\end{minipage}%
\begin{minipage}[t]{0.5\linewidth}
\centering
\includegraphics[width=2.5in]{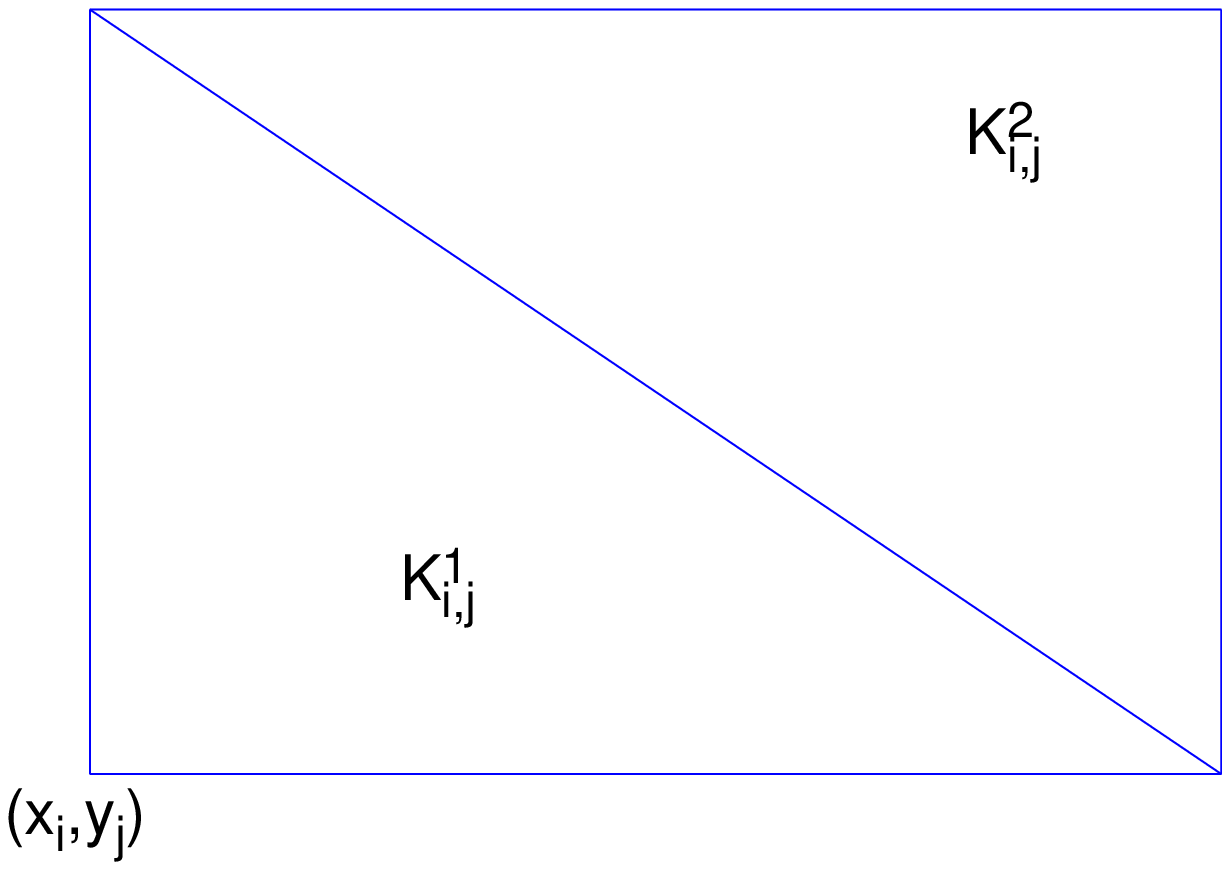}
\caption{$K^{1}_{i,j}$ and $K^{2}_{i,j}$}
\label{fig:code of mesh}
\end{minipage}
\end{figure}

%\begin{remark}\label{remark: layer functions}
%The mesh transition parameters have been chosen such that the boundary layer function $E$ which can be any of $E_{1},E_{2}$ and $E_{12}$ satisfies $\left| E\right|\le CN^{-\rho}$ on $\Omega_{s}$.  In this paper we choose $\rho=2.5$.
%\end{remark}

\par
Next, we introduce the set of mesh points $\left\{ (x_{i},y_{j})\in\bar{\Omega}:\; i,\,j=0,\,\cdots,\,N  \right\}$ defined by
\begin{numcases}{x_{i}=}
2i(1-\lambda_{x})/N ,&\text{for $i=0,\,\cdots,\,N/2$}, \nonumber\\
1-2(N-i)\lambda_{x}/N, &\text{for $i=N/2+1,\,\cdots,\,N$}\nonumber
\end{numcases}
and
\begin{numcases}{y_{j}=}
3j\lambda_{y}/N ,&\text{for $j=0,\,\cdots,\,N/3$}, \nonumber\\
(3j/N-1)-3(2j-N)\lambda_{y}/N ,&\text{for $j=N/3+1,\,\cdots,\,2N/3$}, \nonumber\\
1-3(N-j)\lambda_{y}/N, &\text{for $j=2N/3+1,\,\cdots,\,N$}.\nonumber
\end{numcases}
By drawing lines through these mesh points parallel to the $x$-axis and $y$-axis,  the domain $\Omega$ is partitioned into rectangles  and triangles by drawing the diagonal 
in each rectangle (see Fig. \ref{fig:Shishkin mesh}). This yields a piecewise uniform triangulation of $\Omega$  denoted by $\mathcal{T}_{N}$.

We define $h_{x,i}:=x_{i+1}-x_{i}$ and $h_{y,j}:=y_{j+1}-y_{j}$  which
satisfy
\begin{align*}
N^{-1}\le &h_{x,i}=:H_x,h_{y,j}=:H_y \le 3N^{-1} ,\quad 0\le i < N/2,\; N/3 \le j <2N/3,\\
C_{1}\varepsilon N^{-1}\ln N &\le h_{x,i}=:h_x\le C_{2}\varepsilon N^{-1}\ln N,\quad N/2\le i <N,\\
C_{1}\sqrt{\varepsilon} N^{-1}\ln N &\le h_{y,j}=:h_y\le C_{2}\sqrt{\varepsilon} N^{-1}\ln N,\quad
j=0,\ldots,N/3-1;\; 2N/3,\ldots,N-1.
\end{align*}

 For mesh elements we shall use some notations: $K^1_{i,j}$ for the mesh triangle with vertices $(x_i,y_j)$, $(x_{i+1},y_j)$ and $(x_i,y_{j+1})$; $K^2_{i,j}$  for the mesh triangle with vertices
$(x_i,y_{j+1})$, $(x_{i+1},y_j)$ and $(x_{i+1},y_{j+1})$ (see Fig. \ref{fig:code of mesh});  $K$ for a generic mesh triangle.

\subsection{The streamline diffusion finite element method}
The variational formulation of
 problem \eqref{eq:model problem} is: 
\begin{equation}\label{eq:weak formulation}
\left\{
\begin{array}{lr}
\text{Find $u\in V$ such that for all $v\in V$}\\
\varepsilon (\nabla u,\nabla v)+(b u_{x}+cu,v)=(f,v),
\end{array}
\right.
\end{equation}
where $V:=H^{1}_{0}(\Omega)$. Note that the weak formulation \eqref{eq:weak formulation} has a unique solution by means of the
Lax-Milgram Lemma.

\par
Let $V^N\subset V$ be the finite element space of piecewise linear elements on the Shishkin mesh $\mathcal{T_N}$.
%\par
%Using the linear finite element space $V^{N}$, we can state the standard Galerkin discretisation
%of \eqref{eq:weak formulation} which reads
%\begin{equation*}
%\left\{
%\begin{array}{lr}
%\text{Find $u^{N}\in V^{N}$ such that for all $v^{N}\in V^{N}$}\\
%a_{Gal}(u^{N},v^{N})=(f,v^{N}),
%\end{array}
%\right.
%\end{equation*}
%where 
%$$a_{Gal}(u^{N},v^{N})=\varepsilon (\nabla u^{N},\nabla v^{N})+(bu^{N}_{x},v^{N})
%+(cu^{N},v^{N}).$$
%We define the $\varepsilon$-weighted energy norm  associated with $a_{Gal}(\cdot,\cdot)$:
%\begin{equation}\label{eq:energy norm}
%\Vert v^{N} \Vert^{2}_{\varepsilon}:=
%\varepsilon \vert v^{N} \vert^{2}_{1}+ \Vert v^{N} \Vert^{2}.
%\end{equation}
The SDFEM consists in adding weighted residuals to the standard Galerkin method in order to
stabilize the discretization. It reads:
\begin{equation}\label{eq:SDFEM}
\left\{
\begin{array}{lr}
\text{Find $u^{N}\in V^{N}$ such that for all $v^{N}\in V^{N}$},\\
 a_{SD}(u^{N},v^{N})=(f,v^{N})+\underset{K\subset\Omega}\sum(f,\delta_{K}b v^{N}_{x})_{K},
\end{array}
\right.
\end{equation}
where
\begin{equation*}
a_{SD}(u^{N},v^{N})=a_{Gal}(u^{N},v^{N})+a_{stab}(u^{N},v^{N})
\end{equation*}
and
\begin{align*}
a_{Gal}(u^{N},v^{N})&=\varepsilon (\nabla u^{N},\nabla v^{N})+(bu^{N}_{x}+cu^{N},v^{N}),\\
a_{stab}(u^{N},v^{N})&=\sum_{K\subset\Omega}(-\varepsilon\Delta u^{N}+bu^{N}_{x}+cu^{N},\delta_{K}bv^{N}_{x})_{K}.
\end{align*}
Note that $\Delta u^N=0$ in $K$ for $u^N\vert_K\in P_1(K)$ and $\delta_K=\delta(x,y)|_{K}$.  In this article, the stabilization parameter $\delta$ is chosen to be constant on each subdomain of  $\Omega$. Denote by $\delta_s$ the restriction of $\delta$ in $\Omega_s$ and similar $\delta_x$, $\delta_y$ and $\delta_{xy}$.
%$$
%\delta|_K=\delta_K=
%\left\{
%\begin{array}{ll}
%\delta_{s}& \text{if $K\subset \Omega_s$}\\
%\delta_{x}& \text{if $K\subset \Omega_x$}\\
%\delta_{y}& \text{if $K\subset \Omega_y$}\\
%\delta_{xy}& \text{if $K\subset \Omega_{xy}$}.
%\end{array}
%\right.
%$$

%Note that $\Delta u^N=0$ in $K$ for $u^N\vert_K\in P_1(K)$. Following usual practice
%\cite{Roo1Sty2Tob3:2008-Robust},  {\color{red}\textbf{the parameter $\delta_{K}=\delta_{K}(x,y)$ satisfies}}
%\begin{equation}\label{eq: delta-K}
%0<\delta_{K}\le \frac{\mu_0}{2c^2_K}
%\end{equation}
%where $c_K=\underset{(x,y)\in K}{\max}c(x,y)$ such that the following coercivity holds  (see \cite[Lemma 3.25]{Roo1Sty2Tob3:2008-Robust})
%\begin{equation}\label{eq:SD coercivity}
%a_{SD}(v^{N},v^{N})\ge \frac{1}{2} \Vert v^{N} \Vert^2_{SD},
%\quad \forall v^{N}\in V^{N}.
%\end{equation}
%Also we define the streamline diffusion norm (SD norm) associated with $a_{SD}(\cdot,\cdot)$:
%\begin{equation}\label{eq:SD norm}
%\Vert v^{N} \Vert^{2}_{SD}:=
%\varepsilon \vert v^{N} \vert^{2}_{1}+\mu_{0}\Vert v^{N} \Vert^{2}
%+\sum_{K\subset\Omega} \delta_{K}\Vert 
%bv^{N}_x\Vert^{2}_{K}.
%\end{equation}

The SDFEM satisfies the following orthogonality 
\begin{equation}\label{eq:orthogonality of SD}
a_{SD}(u-u^N,v^N)=0,\quad \forall v^N\in V^N.
\end{equation}
Moreover, as shown in \cite{Roo1Sty2Tob3:2008-Robust}, if the stabilization parameter satisfies 
\begin{equation}\label{eq: delta-K}
0\le \delta_{K}\le \frac{\mu_0}{2\Vert c \Vert^2_{L^{\infty}(K)}},
\end{equation}
the SDFEM is coercive with respect to
the streamline diffusion norm
\begin{equation}\label{eq:SD coercivity}
a_{SD}(v^{N},v^{N})\ge \frac{1}{2} \Vert v^{N} \Vert^2_{SD},
\quad \forall v^{N}\in V^{N}
\end{equation}
where
\begin{equation}\label{eq:SD norm}
\Vert v^{N} \Vert^{2}_{SD}:=
\Vert v^{N} \Vert^{2}_{\varepsilon}
+\sum_{K\subset\Omega} \delta_{K}\Vert 
bv^{N}_x\Vert^{2}_{K}
\end{equation}
and  $\Vert v^{N} \Vert^{2}_{\varepsilon}:=
\varepsilon \vert v^{N} \vert^{2}_{1}+ \mu_{0}\Vert v^{N} \Vert^{2}$.
Note that  existence and uniqueness of the solution to \eqref{eq:SDFEM} is guaranteed by the  coercivity \eqref{eq:SD coercivity}.

\section{Integral inequalities and interpolation errors}\label{sub-section: preliminary}
In this section we present  integral inequalities for the diffusion and convection parts in the bilinear form and some interpolation bounds  for our later analysis. For notation convenience,  we set 
$$\partial^{l}_x\partial^{m}_y v:=\frac{\partial^{l+m}v}{\partial x^l\partial y^m}.$$
%\begin{figure}
%\begin{minipage}[t]{0.5\linewidth}
%\centering
%\includegraphics[width=2.5in]{super-diffusion-x.eps}
%\caption{Structure of $\mathcal{Q}_{i,j}$}
%\label{fig:Q}
%\end{minipage}%
%\begin{minipage}[t]{0.5\linewidth}
%\centering
%\includegraphics[width=2.5in]{super-diffusion-y.eps}
%\caption{Structure of $\mathcal{S}_{i,j}$}
%\label{fig:S}
%\end{minipage}
%\end{figure}

The following lemma will be used to  obtain sharp estimates of the diffusion part in the bilinear form $a_{SD}(\cdot,\cdot)$.
\begin{lemma}\label{lem:superconvergence-diffusion-term}
 Assume that $w\in  C^3(\bar{\Omega})$  and $v^N\in V^N$.   Let $w^I$ be the standard nodal linear interpolation on $\mathcal{T}_N$ and $l$, $m$ be nonnegative integers. If  $h_{y,j-1}=h_{y,j}$, then 
we have
\begin{equation*}
\left\vert \int_{\mathcal{Q}_{i,j}}  (w-w^{I})_{x}v^{N}_{x}\mr{d}x\mr{d}y\right\vert
\le 
C\sum_{l+m=2}h^{l}_{x,i}h^{m}_{y,j} \Vert \partial^{l+1}_x\partial^m_y w \Vert_{L^{\infty}(\mathcal{Q}_{i,j})}
\Vert v^{N}_{x} \Vert_{L^{1}(\mathcal{Q}_{i,j})},
\end{equation*}
where $\mathcal{Q}_{i,j}:=K^{1}_{i,j}\cup K^{2}_{i,j-1}$. If $h_{x,i-1}=h_{x,i}$, then we have
%\label{eq:diffusion-term x}
\begin{equation*}
\left\vert \int_{\mathcal{S}_{i,j}}  (w-w^{I})_{y}v^{N}_{y}\mr{d}x\mr{d}y\right\vert
\le 
C\sum_{l+m=2}h^{l}_{x,i}h^{m}_{y,j}\Vert \partial^{l}_x\partial^{m+1}_y w \Vert_{L^{\infty}(\mathcal{S}_{i,j})}
\Vert v^{N}_{y} \Vert_{L^{1}(\mathcal{S}_{i,j})}
%\label{eq:diffusion-term y}
\end{equation*}
where $\mathcal{S}_{i,j}:=K^{2}_{i-1,j}\cup K^{1}_{i,j}$.
\end{lemma}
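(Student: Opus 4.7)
The plan is to exploit the geometry of $\mathcal{Q}_{i,j}=K^1_{i,j}\cup K^2_{i,j-1}$ together with a Taylor expansion of $w$ at the shared corner $(x_i,y_j)$. My first step is the observation that, because $v^N\in V^N$ is continuous and piecewise linear and the two triangles share the horizontal edge from $(x_i,y_j)$ to $(x_{i+1},y_j)$, the restriction of $v^N$ to that edge is a single linear function of $x$ viewed from either side. Hence the piecewise-constant derivative $v^N_x$ takes one and the same value $V_x=(v^N(x_{i+1},y_j)-v^N(x_i,y_j))/h_{x,i}$ on all of $\mathcal{Q}_{i,j}$. Using $|V_x|\,|\mathcal{Q}_{i,j}|=\|v^N_x\|_{L^1(\mathcal{Q}_{i,j})}$, the claim then reduces to proving
\[
\Bigl|\int_{\mathcal{Q}_{i,j}}(w-w^I)_x\,dx\,dy\Bigr|\le C\,|\mathcal{Q}_{i,j}|\sum_{l+m=2}h_{x,i}^l h_{y,j}^m\,\|\partial_x^{l+1}\partial_y^m w\|_{L^\infty(\mathcal{Q}_{i,j})}.
\]

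For this, I would decompose $w=T_2+R$, where $T_2$ is the degree-$2$ Taylor polynomial of $w$ at $(x_i,y_j)$ and $R$ is the integral remainder. The linear part of $T_2$ lies in $V^N$ and is interpolated exactly, so $T_2-T_2^I$ is built from just the quadratic monomials $(x-x_i)^2$, $(x-x_i)(y-y_j)$, $(y-y_j)^2$. The central, and most delicate, step is to verify that each of these contributes zero to $\int_{\mathcal{Q}_{i,j}}(\cdot-(\cdot)^I)_x\,dx\,dy$ precisely under the hypothesis $h_{y,j-1}=h_{y,j}$. I would carry this out by writing the barycentric coordinates on $K^1_{i,j}$ and $K^2_{i,j-1}$ explicitly (noting that the coordinate at $(x_{i+1},y_{j-1})$ on $K^2_{i,j-1}$ carries the opposite sign in $y-y_j$ from its $K^1_{i,j}$ analogue), forming the three nodal interpolants, differentiating in $x$, and evaluating the resulting triangle integrals. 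The two triangle contributions turn out to be equal in magnitude and opposite in sign in each of the three cases, and cancel exactly when the two row heights agree; this is where the mesh-symmetry hypothesis enters decisively.

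For the remainder I bound $R_x$ by the standard first-order Taylor inequality for $w_x$ at $(x_i,y_j)$, which produces $\|R_x\|_{L^\infty(\mathcal{Q}_{i,j})}\le C\sum_{l+m=2}h_{x,i}^l h_{y,j}^m\|\partial_x^{l+1}\partial_y^m w\|_{L^\infty(\mathcal{Q}_{i,j})}$ in exactly the required form. For $R^I_x$ I would use that on both $K^1_{i,j}$ and $K^2_{i,j-1}$ the hat function attached to the off-row vertex has vanishing $x$-derivative, so on each triangle $\partial_x R^I=(R(x_{i+1},y_j)-R(x_i,y_j))/h_{x,i}$; since $R(x_i,y_j)=0$ by the choice of expansion point and the one-dimensional Taylor estimate along $y=y_j$ gives $|R(x_{i+1},y_j)|\le Ch_{x,i}^3\|w_{xxx}\|_{L^\infty}$, I obtain $\|R^I_x\|_{L^\infty(\mathcal{Q}_{i,j})}\le Ch_{x,i}^2\|w_{xxx}\|_{L^\infty(\mathcal{Q}_{i,j})}$, which also fits the target sum. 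Combining these estimates with $|\mathcal{Q}_{i,j}|\le Ch_{x,i}h_{y,j}$ and multiplying by $|V_x|$ yields the first inequality. The second inequality is proved by a fully symmetric argument on $\mathcal{S}_{i,j}=K^2_{i-1,j}\cup K^1_{i,j}$, whose shared edge is the vertical segment on $x=x_i$; there $v^N_y$ is constant on $\mathcal{S}_{i,j}$, and the Taylor and cancellation computation carries over with $x$ and $y$ interchanged and with $h_{x,i-1}=h_{x,i}$ playing the role of $h_{y,j-1}=h_{y,j}$.
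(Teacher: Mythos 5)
Your proposal is correct and follows essentially the same route as the paper's (sketched) proof: observe that $v^N_x$ is a single constant on the two triangles sharing the horizontal edge through $(x_i,y_j)$, Taylor-expand at that shared vertex, and check that the second-order terms integrate to zero over $\mathcal{Q}_{i,j}$ precisely because $h_{y,j-1}=h_{y,j}$ makes the two triangle contributions cancel, leaving only the third-derivative remainder. Your explicit verification of the cancellation for the three quadratic monomials and the treatment of $R^I_x$ via the vanishing $x$-derivative of the off-row hat functions are sound and supply exactly the details the paper defers to its companion reference.
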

\begin{proof}
Note that $v^N_x$ is a constant on the set $\mathcal{Q}_{i,j}$. First we expand $(w-w^{I})_{x}$  by Taylor's formula at $(x_i,y_j)$ with Lagrange form of the remainder.   After integration on $\mathcal{Q}_{i,j}$, we can offset  terms involving low derivatives of $w$.
Then  the first inequality is obtained.
The second inequality can be proved  similarly .  
See \cite[Lemma 2.1]{Zhan1Liu2:2015-Supercloseness-EL} for more details. 
\end{proof}

The following integral inequalities provide sharp estimates of  the convection part in the bilinear form $a_{SD}(\cdot,\cdot)$.
%exploit the continuity of test functions
\begin{lemma}\label{lem:superconvergence-convection-term}
 Assume that $w\in C^{3}(\bar{\Omega})$ and let $w^I$ be the  piecewise linear
interpolation of $w$ on $\mathcal{T}_N$. Set
$\alpha=1$ or $\alpha=2$ and $p, q, l, m$ are nonnegative integers satisfying $0\le p+q\le 1$.
Suppose $h_{x,i-1}=h_{x,i}$, then we have
\begin{align}
&\left\vert \int_{K^{\alpha}_{i-1,j}}
%\frac{\partial^{p+q} }{\partial x^p\partial y^q}
 \partial^{p}_{x}\partial^{q}_{y}
(w-w^{I})\mr{d}x\mr{d}y-
\int_{K^{\alpha}_{i,j}}
%\frac{\partial^{p+q} }{\partial x^p\partial y^q}
 \partial^{p}_{x}\partial^{q}_{y}
(w-w^{I})\mr{d}x\mr{d}y\right\vert
\label{eq: w-wI-derivative-convection-x}\\
\le  &
C\sum_{l+m=3}h^{l+1-p}_{x,i}h^{m+1-q}_{y,j}\left \Vert
%\frac{\partial^{l+m} }{\partial x^l\partial y^m} w
 \partial^{l}_{x}\partial^{m}_{y} w
  \right\Vert_{L^{\infty}(K^{\alpha}_{i-1,j}\cup K^{\alpha}_{i,j})}.
\nonumber
\end{align}
Suppose $h_{y,j-1}=h_{y,j}$, then we have
\begin{align}
  &\left\vert \int_{K^{\alpha}_{i,j-1}}
%\frac{\partial^{p+q} }{\partial x^p\partial y^q}
\partial^{p}_{x}\partial^{q}_{y}
 (w-w^{I})\mr{d}x\mr{d}y-
\int_{K^{\alpha}_{i,j}}
% \frac{\partial^{p+q} }{\partial x^p\partial y^q}
 \partial^{p}_{x}\partial^{q}_{y}
 (w-w^{I})\mr{d}x\mr{d}y\right\vert
\label{eq: w-wI-derivative-convection-y}\\
\le  &
C\sum_{l+m=3}h^{l+1-p}_{x,i}h^{m+1-q}_{y,j}\left \Vert
% \frac{\partial^{l+m} }{\partial x^l\partial y^m}
 \partial^{l}_{x}\partial^{m}_{y}
 w  \right\Vert_{L^{\infty}(K^{\alpha}_{i,j-1}\cup K^{\alpha}_{i,j})}.
\nonumber
\end{align}
\end{lemma}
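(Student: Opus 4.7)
The plan is to exploit Taylor expansion at the shared vertex together with a polynomial cancellation identity that encodes the hypothesis on mesh widths. By the $x$--$y$ symmetry of the two inequalities it suffices to establish \eqref{eq: w-wI-derivative-convection-x}; and the two cases $\alpha=1,2$ are parallel after a relabeling of vertices, so I will focus on $\alpha=1$ and take the common vertex $(x_i,y_j)$ of $K^{1}_{i-1,j}$ and $K^{1}_{i,j}$ as the reference point.

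First I would write $w=T_{2}+R$, where $T_{2}$ is the Taylor polynomial of $w$ about $(x_i,y_j)$ of degree at most $2$ and $R$ is the cubic Lagrange-form remainder. Because the map $v\mapsto v^{I}$ is linear, $w-w^{I}=(T_{2}-T_{2}^{I})+(R-R^{I})$ on each triangle, and the idea is to cancel the $T_{2}$ contribution exactly and control the $R$ contribution by third derivatives of $w$.

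The decisive step is the cancellation identity: for every polynomial $P$ of degree at most two and all nonnegative integers $p,q$ with $p+q\le 1$,
\begin{equation*}
\int_{K^{1}_{i-1,j}}\partial^{p}_{x}\partial^{q}_{y}(P-P^{I})\,\mr{d}x\mr{d}y
=
\int_{K^{1}_{i,j}}\partial^{p}_{x}\partial^{q}_{y}(P-P^{I})\,\mr{d}x\mr{d}y
\end{equation*}
under $h_{x,i-1}=h_{x,i}$. By linearity it is enough to verify this on the basis $\{1,\,x-x_i,\,y-y_j,\,(x-x_i)^{2},\,(x-x_i)(y-y_j),\,(y-y_j)^{2}\}$: the three linear basis elements satisfy $P=P^{I}$ and contribute zero on both sides, and each of the three quadratic monomials is handled by a direct calculation in which one parameterizes both triangles, determines $P^{I}$ from its three vertex values on each, and evaluates the resulting polynomial integrals. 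Applied with $P=T_{2}$, this identity collapses the left-hand side of \eqref{eq: w-wI-derivative-convection-x} into the same expression with $R$ in place of $w-w^{I}$.

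To close, I would use the fact that $\partial^{p}_{x}\partial^{q}_{y}R$ is itself the order-$(3-p-q)$ Taylor remainder of $\partial^{p}_{x}\partial^{q}_{y}w$ about $(x_i,y_j)$, which yields
\begin{equation*}
\|\partial^{p}_{x}\partial^{q}_{y}R\|_{L^{\infty}(K)}
\le C\sum_{l+m=3} h^{l-p}_{x,i}\,h^{m-q}_{y,j}\,\|\partial^{l}_{x}\partial^{m}_{y}w\|_{L^{\infty}(K)}
\end{equation*}
on either triangle (terms containing negative exponents being dropped), together with the analogous bound on $R^{I}$ obtained from its vertex values via the right-triangle geometry. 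Multiplying by the triangle area ($\sim h_{x,i}h_{y,j}$) and summing over the two triangles produces exactly the right-hand side of \eqref{eq: w-wI-derivative-convection-x}. The principal obstacle will be the cancellation identity itself: since $K^{1}_{i-1,j}$ and $K^{1}_{i,j}$ are not reflections of each other across $x=x_i$ (their hypotenuses face the same direction), the equality cannot be read off from an obvious symmetry argument and instead must be confirmed monomial by monomial, an elementary but somewhat tedious calculation.
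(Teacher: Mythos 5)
Your proposal is correct and follows essentially the same route as the paper: Taylor expansion of $w$ about the shared vertex $(x_i,y_j)$, exact cancellation of the degree-$\le 2$ contribution between the two congruent triangles under $h_{x,i-1}=h_{x,i}$ (the paper verifies this by computing the integrals explicitly, which is the same monomial-by-monomial calculation you describe), and an $L^\infty$ bound on the cubic remainder multiplied by the triangle area. The only difference is presentational — you package the cancellation as an identity for quadratic polynomials and spell out the $p+q=1$ cases, which the paper dismisses with ``the other estimates can be obtained similarly.''
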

%\begin{proof}
%See  \cite[Lemma 3.2]{Zhan1Liu2:2015-Pointwise-EL}.
%\end{proof}
\begin{proof}
%It is sufficient to prove these results for $w\in C^3(\bar{\Omega})$,
%for $C^3(\bar{\Omega})$ is dense in $H^{3}(\Omega)$.
We just prove \eqref{eq: w-wI-derivative-convection-x} for $\alpha=1$ and $p=q=0$.  The other estimates  can be  obtained similarly.

Expanding $(w-w^{I})_{x}$  by Taylor's formula at $(x_i,y_j)$,  we have
\begin{align*}
&(w-w^I)\vert_{K^1_{i,j}}=w(x,y)-(w(x_i,y_j)\lambda_1+w(x_{i+1},y_j)\lambda_2+
w(x_{i},y_{j+1})\lambda_3)\\
&=
w(x_i,y_j)+
\left( w_x(x_i,y_j)(x-x_i)+w_y(x_i,y_j)(y-y_j) \right)\\
&+\left(w_{xx}(x_i,y_j)\frac{(x-x_i)^2}{2}+w_{xy}(x_i,y_j)(x-x_i)(y-y_j) +w_{yy}(x_i,y_j)\frac{(y-y_j)^2}{2} \right)\\
&-\left(w(x_i,y_j)+w_x(x_i,y_j)h_{x,i}\lambda_2+w_{y}(x_i,y_j)h_{y,j}\lambda_3\right)\\
&-\left(w_{xx}(x_i,y_j)\frac{ h^2_{x,i} } {2}\lambda_2+w_{yy}(x_i,y_j)\frac{ h^2_{y,j} } {2}\lambda_3\right)+\mathcal{R}_{i,j},
\end{align*}
where $\lambda_1=1-\lambda_2-\lambda_3$,  $\lambda_2=\frac{x-x_i}{ h_{x,i} }$, $\lambda_3=\frac{ y-y_j }{ h_{y,j} }$ are the area basis functions and
\begin{equation}
\Vert \mathcal{R}_{i,j} \Vert_{L^{\infty}(K^{1}_{i,j})}\le
C\sum_{l+m=3}h^{l}_{x,i}h^{m}_{y,j}\left \Vert
%\frac{\partial^{l+m} }{\partial x^l\partial y^m}
\partial^{l}_{x}\partial^{m}_{y}
w \right\Vert_{L^{\infty}(K^{1}_{i,j})}\label{eq:super-convection-I-1}.
\end{equation}
Direct calculations yield
\begin{align}
\int_{ K^1_{i,j} }(w-w^I)\mr{d}x\mr{d}y
=&w_{xx}(x_i,y_j)\left(-\frac{h^{3}_{x,i}h_{y,j}}{24}\right)+w_{xy}(x_i,y_j)\left(\frac{h^{2}_{x,i}h^{2}_{y,j}}{24}\right)
\label{eq:super-convection-I-2}\\
&+w_{yy}(x_i,y_j)\left(-\frac{h_{x,i}h^{3}_{y,j} }{24}\right)
+\int_{ K^1_{i,j} } \mathcal{R}_{i,j} \mr{d}x\mr{d}y
\nonumber.
\end{align}
Similarly,  we have
\begin{align}
\int_{ K^1_{i-1,j} }(w-w^I)\mr{d}x\mr{d}y
=&w_{xx}(x_i,y_j)\left(-\frac{h^{3}_{x,i}h_{y,j}}{24}\right)
+w_{xy}(x_i,y_j)\left(\frac{h^{2}_{x,i}h^{2}_{y,j}}{24}\right)\label{eq:super-convection-II-1}\\
&+w_{yy}(x_i,y_j)\left(-\frac{h_{x,i}h^{3}_{y,j} }{24}\right)
+\int_{ K^1_{i-1,j} } \mathcal{R}_{i-1,j} \mr{d}x\mr{d}y
\nonumber
\end{align}
and
\begin{equation}
\Vert \mathcal{R}_{i-1,j} \Vert_{L^{\infty}(K^{1}_{i-1,j})}\le
\sum_{l+m=3}h^{l}_{x,i}h^{m}_{y,j}\left \Vert
%\frac{\partial^{l+m} }{\partial x^l\partial y^m}
\partial^{l}_{x}\partial^{m}_{y}
w \right\Vert_{L^{\infty}(K^{1}_{i-1,j})}\label{eq:super-convection-II-2}
\end{equation}
where the condition $h_{x,i-1}=h_{x,i}$ has been used in \eqref{eq:super-convection-II-1}. 

Combining \eqref{eq:super-convection-I-1}---\eqref{eq:super-convection-II-2}, we obtain
\eqref{eq: w-wI-derivative-convection-x} for $\alpha=1$ and $p=q=0$.
\end{proof}

For analysis on Shishkin meshes, we need the following  anisotropic interpolation error bounds given in
\cite[Lemma 3.2]{Guo1Styn2:1997-Pointwise}.
\begin{lemma}\label{lem: anisotropic interpolation-triangle}
Let $K\in\mathcal{T}_N$ and $p\in (1,\infty]$ and
suppose that $K$ is $K^1_{i,j}$ or $K^2_{i,j}$. Assume
that $w\in W^{2,p}(\Omega)$ and denote by $w^I$ the linear function that interpolates to $w$ at the vertices of
$K$.  Then
\begin{align*}
&\Vert w-w^I \Vert_{L^p(K)}\le
C\sum_{l+m=2}h^l_{x,i}h^m_{y,j}\Vert \partial^l_x\partial^m_y w \Vert_{L^p(K)},\\
&\Vert (w-w^I)_x \Vert_{L^p(K)}\le
C\sum_{l+m=1}h^l_{x,i}h^m_{y,j}\Vert \partial^{l+1}_x\partial^m_y w \Vert_{L^p(K)},\\
&\Vert (w-w^I)_y \Vert_{L^p(K)}\le
C\sum_{l+m=1}h^l_{x,i}h^m_{y,j}\Vert \partial^{l}_x\partial^{m+1}_y w \Vert_{L^p(K)},
\end{align*}
where $l$ and $m$ are nonnegative integers.
\end{lemma}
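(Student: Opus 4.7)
The plan is to reduce all three estimates to anisotropic bounds on the fixed reference triangle $\hat K$ with vertices $(0,0),(1,0),(0,1)$, and then unscale. To set this up, map $K^1_{i,j}$ onto $\hat K$ by the axis-aligned affine transformation $F_K(\hat x,\hat y)=(x_i+h_{x,i}\hat x,\,y_j+h_{y,j}\hat y)$ (with the obvious variant for $K^2_{i,j}$), and set $\hat w:=w\circ F_K$. The standard scaling identities are then
\begin{equation*}
\|v\|_{L^p(K)}=(h_{x,i}h_{y,j})^{1/p}\|\hat v\|_{L^p(\hat K)},\qquad
\partial_{\hat x}^{l}\partial_{\hat y}^{m}\hat w=h_{x,i}^{l}h_{y,j}^{m}\,(\partial_x^{l}\partial_y^{m}w)\circ F_K,
\end{equation*}
and, because $F_K$ is linear, nodal linear interpolation commutes with it, i.e.\ $\widehat{w^I}=(\hat w)^I$.

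On $\hat K$ it then suffices to prove the three anisotropic estimates
\begin{align*}
\|\hat w-\hat w^I\|_{L^p(\hat K)} &\le C\sum_{l+m=2}\|\partial_{\hat x}^{l}\partial_{\hat y}^{m}\hat w\|_{L^p(\hat K)}, \\
\|\partial_{\hat x}(\hat w-\hat w^I)\|_{L^p(\hat K)} &\le C\sum_{l+m=1}\|\partial_{\hat x}^{l+1}\partial_{\hat y}^{m}\hat w\|_{L^p(\hat K)},
\end{align*}
together with the symmetric $\partial_{\hat y}$-counterpart. The first is the classical Bramble--Hilbert inequality: the nodal interpolant is bounded on $W^{2,p}(\hat K)\hookrightarrow C(\overline{\hat K})$ (valid for any $p>1$ in two dimensions) and reproduces $P_1$, so a Deny--Lions polynomial-subtraction argument yields the bound with the full second-order seminorm $|\hat w|_{W^{2,p}(\hat K)}$ on the right, which matches the stated sum. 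For the anisotropic bound on $\partial_{\hat x}(\hat w-\hat w^I)$, set $u:=\partial_{\hat x}\hat w$. From $\hat w^I=\hat w(0,0)(1-\hat x-\hat y)+\hat w(1,0)\hat x+\hat w(0,1)\hat y$ one reads off that $\partial_{\hat x}\hat w^I=\hat w(1,0)-\hat w(0,0)=\int_0^1 u(s,0)\,ds=:\bar u_0$ is a single constant, namely the trace-average of $u$ along the bottom edge. A Poincar\'e inequality with mean subtracted on a boundary edge of positive measure then gives
\begin{equation*}
\|\partial_{\hat x}(\hat w-\hat w^I)\|_{L^p(\hat K)}=\|u-\bar u_0\|_{L^p(\hat K)}\le C\|\nabla u\|_{L^p(\hat K)}=C\bigl(\|\partial_{\hat x}^{2}\hat w\|_{L^p(\hat K)}+\|\partial_{\hat x}\partial_{\hat y}\hat w\|_{L^p(\hat K)}\bigr),
\end{equation*}
which is exactly the required $\sum_{l+m=1}$-sum; the $\partial_{\hat y}$-bound is identical with the roles of the two coordinates interchanged and the left edge $\{(0,\hat y):0\le\hat y\le 1\}$ used in place of the bottom edge.

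Finally, substituting the scaling identities into each reference bound, the volume factor $(h_{x,i}h_{y,j})^{1/p}$ coming from the $L^p$-scaling cancels the same factor generated when converting $\hat w$-derivatives back to anisotropically weighted $w$-derivatives, and the claimed estimates on $K$ follow. The main obstacle in this program is the anisotropic reference estimate itself: a naive Bramble--Hilbert argument would introduce a stray $\partial_{\hat y}^{2}\hat w$ term on the right of the $\partial_{\hat x}$-bound. The crucial observation is that $\partial_{\hat x}\hat w^I$ depends on $\hat w$ only through a single-edge trace, which reduces matters to a one-edge Poincar\'e inequality and thereby eliminates $\partial_{\hat y}^{2}$ from the right-hand side.
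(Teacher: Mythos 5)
Your proposal is correct, but note that the paper offers no proof of this lemma at all: it is quoted directly from \cite{Guo1Styn2:1997-Pointwise} (Lemma 3.2 there), so there is no in-paper argument to compare against, and what you have written is a genuine self-contained derivation. Your route is the standard one from anisotropic interpolation theory (in the style of Apel--Dobrowolski): the zeroth-order bound is Deny--Lions/Bramble--Hilbert after axis-aligned scaling, and you correctly identify the only real difficulty, namely that a naive Bramble--Hilbert estimate for $\partial_{\hat x}(\hat w-\hat w^I)$ would leave a $\partial_{\hat y}^2\hat w$ term which, after unscaling, carries the ratio $h_{y,j}^2/h_{x,i}$ and ruins the estimate on the strongly anisotropic cells in the layer regions. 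Your observation that $\partial_{\hat x}\hat w^I$ is the mean of $\partial_{\hat x}\hat w$ over the bottom edge, followed by an edge-mean Poincar\'e inequality, eliminates exactly that term. Two small points are worth making explicit: (i) the identity $\hat w(1,0)-\hat w(0,0)=\int_0^1\partial_{\hat x}\hat w(s,0)\,\mathrm{d}s$ requires the trace of $\hat w$ on the edge to be absolutely continuous, which holds since it lies in $W^{1,p}$ of the edge for $p>1$, and the edge-mean functional is bounded on $W^{1,p}(\hat K)$ by the trace theorem, so the Deny--Lions argument behind your one-edge Poincar\'e inequality is legitimate for all $p\in(1,\infty]$; (ii) for $K^2_{i,j}$ the affine map involves a reflection but its linear part is still diagonal, so the scaling identities and the edge identification are unchanged. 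With these routine details supplied, the proof stands.
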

The following local estimates will also be frequently used.
\begin{lemma}\label{lem:linear interpolation}
Let $u^{I}$ and $E^{I}$ denote the piecewise linear
interpolation of $u$ and $E$, respectively,  on the Shishkin mesh $\mathcal{T}_{N}$, where $E$ can be any one of $E_{1}$, $E_{2}$ or $E_{12}$. Suppose that $u$ satisfies Assumption \ref{assumption-regularity}, then
\begin{align*}
&\Vert u-u^{I} \Vert_{L^{\infty}(K)}\le
\left\{
\begin{array}{ll}
CN^{-2},&\text{if $K \subset\Omega_{s}$}\\
CN^{-2}\ln^{2}N, &\text{otherwise}
\end{array}
\right. ,\\
&\Vert E^I \Vert_{L^{\infty}(\Omega_{s})}+\Vert \nabla E^I \Vert_{L^{1}(\Omega_{s})}\le CN^{-\rho},\\
&\Vert E^I_1 \Vert_{L^{\infty}(\Omega_{y})}+\Vert E^I_{12} \Vert_{L^{\infty}(\Omega_{y})}\le CN^{-\rho},\\
&\Vert ( E^I_{12})_y \Vert_{L^{1}(\Omega_{y})}\le C  N^{-(1+\rho)},\\
&\Vert \nabla E^I_1 \Vert_{L^{1}(\Omega_{y})}
+\Vert ( E^I_{12})_x \Vert_{L^{1}(\Omega_{y})}\le C\varepsilon^{1/2} N^{-\rho} \ln N.
\end{align*}
%\begin{align*}
%&\Vert \nabla E^I \Vert_{L^{1}(\Omega_{s})}\le CN^{-\rho},\quad \Vert (E^{I}_{1})_x \Vert_{L^{\infty}(\Omega_y)}\le C\varepsilon^{-1}N^{-\rho},\\
%&\Vert (E^{I}_{2})_x \Vert_{L^{\infty}(\Omega_x)}\le CN^{-\rho},\quad \Vert \nabla E^{I}_{12}  \Vert_{L^{\infty}(\Omega_{x}\cup\Omega_y)}\le C\varepsilon^{-1}N^{-\rho}.
%\end{align*}
\end{lemma}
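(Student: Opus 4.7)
Each of the five estimates is proved independently. All of them rest on two simple ingredients: the pointwise a priori bounds of Assumption \ref{assumption-regularity}, which at any mesh node lying outside the relevant layer zone are controlled by $e^{-\beta\lambda_x/\varepsilon} = N^{-\rho}$ or $e^{-\lambda_y/\sqrt{\varepsilon}} = N^{-\rho}$; and the explicit structure of the linear interpolant on each triangle $K^{\alpha}_{i,j}$, namely that $E^I$ is a convex combination of the three vertex values of $E$ and that $(E^I)_x$, $(E^I)_y$ are constants equal to finite differences of those vertex values divided by $h_{x,i}$, $h_{y,j}$.

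For the first estimate I would decompose $u-u^I = (S-S^I) + \sum_{k\in\{1,2,12\}}(E_k - E_k^I)$ according to Assumption \ref{assumption-regularity}. On $K \subset \Omega_s$ the regular part yields $CN^{-2}$ by Lemma \ref{lem: anisotropic interpolation-triangle} (bounded derivatives, mesh width $O(N^{-1})$), while each layer contribution is bounded by $\|E_k\|_{L^\infty(K)} + \|E_k^I\|_{L^\infty(K)} \le CN^{-\rho} \le CN^{-2}$ thanks to $\rho = 2.5$ and the nodal exponential bound above. On any $K$ off $\Omega_s$ the Shishkin mesh is anisotropically refined exactly in the direction of the large $\varepsilon$-dependent derivatives of the relevant layer term; applying Lemma \ref{lem: anisotropic interpolation-triangle} component-wise with $h_x = O(\varepsilon N^{-1}\ln N)$ or $h_y = O(\sqrt{\varepsilon}N^{-1}\ln N)$ produces the $CN^{-2}\ln^2 N$ bound.

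The three $L^\infty$ bounds for $E^I$ follow at once, since a convex combination of vertex values is bounded by the maximum vertex value, and every such vertex lies in a region where the relevant exponential factor is $\le CN^{-\rho}$. For the $L^1$ gradient bounds I would use the explicit formula $\|(E^I)_x\|_{L^1(K)} = (h_{y,j}/2)\,|E(x_{i+1},y_\ast) - E(x_i,y_\ast)|$ and its $y$-analogue, estimating the nodal difference either by $\int|E_x|\,\mathrm{d}x$ along the corresponding edge (when one wants to harvest exponential decay) or by $2\max_{\text{vertex}}|E|$ (when the nodal bound already suffices). Summation over $i,j$ then reduces to a product of Riemann-sum estimates of the form $\sum_i h_{x,i}e^{-\beta(1-x_i)/\varepsilon} \le C\varepsilon N^{-\rho}$ and, for $j$ restricted to the $\Omega_y$ range, $\sum_{j} h_{y,j}(e^{-y_j/\sqrt{\varepsilon}}+e^{-(1-y_{j+1})/\sqrt{\varepsilon}}) \le C\sqrt{\varepsilon}$, which after multiplication by the appropriate derivative weights yield the claimed $CN^{-\rho}$, $CN^{-(1+\rho)}$ (invoking $\varepsilon \le N^{-1}$), and $C\varepsilon^{1/2}N^{-\rho}\ln N$ respectively.

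\textbf{Main obstacle.} The individual calculations are mechanical, but the bookkeeping is delicate: one must identify in each case which factor -- the integrated exponential in the coarse direction, the nodal exponential at far-boundary nodes, or the mesh width in the fine direction -- supplies each of the powers of $N^{-1}$, $\varepsilon^{1/2}$, and $\ln N$, so that no logarithm or $\varepsilon^{1/2}$ is lost. The sharpest bounds (the last two) cannot be obtained from a naive elementwise interpolation argument: they genuinely require combining integrated $x$-direction exponential decay with either the $O(\sqrt{\varepsilon}\ln N)$ $y$-width of $\Omega_y$ or with the standing hypothesis $\varepsilon \le N^{-1}$.
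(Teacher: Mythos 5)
Your proposal is correct and follows essentially the same route as the paper: the $L^\infty$ bounds come from the interpolant being a convex combination of nodal values that all carry the factor $N^{-\rho}$, the gradient bounds come from representing $(E^I)_x$, $(E^I)_y$ as divided differences of nodal values (equivalently, derivative values at intermediate points by the mean value theorem) and comparing the resulting sums with integrals of the exponentials, and the first inequality is the standard decomposition argument (which the paper simply delegates to Stynes--O'Riordan). The only minor divergence is in the column of elements abutting $x=1-\lambda_x$, where the paper resorts to an inverse estimate $\Vert (E^I_1)_x\Vert_{L^1}\le CN\Vert E^I_1\Vert_{L^1}$ while you exploit the cancellation of $h_{x,i}$ in $\Vert (E^I)_x\Vert_{L^1(K)}=\tfrac{h_{y,j}}{2}\,\vert E(x_{i+1},\cdot)-E(x_i,\cdot)\vert$ together with the nodal bound --- an equivalent and arguably more elementary fix for the same difficulty.
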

\begin{proof}
The first inequality  can be obtained in a similar way as \cite[Theorem 4.2]{Styn1ORior2:1997-uniformly}.
Here we only prove the second inequality for $E=E_1$ and the others can be proved similarly.
%readers are referred to \cite[Lemma 3.2]{Styn1Tobi2:2003-SDFEM} for basic ideas. For example, 
%Next, we present the analysis of the second inequality for $E_1$.  
Recalling $E^I_1$ is the piecewise linear
interpolation of $E_1$, we have
\begin{equation}\label{eq:EI1-max}
\Vert E^I_1 \Vert_{L^{\infty}(\Omega_{s})}\le \Vert E_1 \Vert_{L^{\infty}(\Omega_{s})}\le CN^{-\rho}
\end{equation}
%(E^I_1)_y|_{K^1_{i,j} }&=\frac{E_1(x_i,y_{j+1})-E_1(x_i,y_{j})  }{ h_{y,j} } 
%=(E_1)_y(x_i,\eta_j)\\
and 
\begin{align*}
  (E^I_1)_y|_{K^1_{i,j} }&=\frac{E_1(x_{i},y_{j+1})-E_1(x_i,y_{j})  }{ h_{y,j} } 
=(E_1)_y(x_i,\eta_j)
\end{align*}
 where $\eta_j\in (y_j,y_{j+1})$. Similarly, we have $ (E^I_1)_y|_{K^2_{i,j} }=(E_1)_y(x_{i+1},\tilde{\eta}_j)$ with  $\tilde{\eta}_j\in (y_j,y_{j+1})$ and
\begin{equation}\label{eq:EI1}
  (E^I_1)_x|_{K^1_{i,j} }=(E_1)_x(\xi_i,y_j),\quad
 (E^I_1)_x|_{K^2_{i,j} }=(E_1)_x(\tilde{\xi}_i,y_{j+1})
\end{equation}
where $\xi_i,\tilde{\xi}_i\in(x_i,x_{i+1})$. 
Recalling Assumption \ref{assumption-regularity}, we obtain 
\begin{align}
\left | (E^I_1)_y|_K \right|
\le
 C\Vert  (E_1)_y \Vert_{L^{\infty}(\Omega_s) }
  \le C N^{-\rho}, \; \forall K \subset \Omega_s. \nonumber
 \end{align}
Then we have
\begin{equation}\label{eq:EI1y-L1}
\Vert (E^I_1)_y \Vert_{L^{1}(\Omega_{s})}
\le 
C N^{-\rho}.
\end{equation}
Setting $\Omega_{s,r}:=\cup_{j=N/3}^{2N/3-2}\cup_{m=1}^2 K^m_{N/2-1,j} $.
 Recalling \eqref{eq:EI1} and Assumption \ref{assumption-regularity}, we obtain
\begin{align}
\Vert (E^I_1)_x \Vert_{L^1(\Omega_s\setminus\Omega_{s,r} )}
&=\sum_{i=0}^{N/2-2}\sum_{j=N/3}^{2N/3-1}\sum_{m=1}^2 \Vert (E^I_1)_x \Vert_{L^1(K^m_{i,j}) }\label{eq:EI1x-L1-I}\\
&\le 
C\sum_{i=0}^{N/2-2}  N^{-1}\varepsilon^{-1}e^{-\beta(1-x_{i+1})/\varepsilon}\nonumber \\
&\le
C\sum_{i=0}^{N/2-2}\int_{x_i}^{ x_{i+1} } \varepsilon^{-1}e^{-\beta(1-x)/\varepsilon}\mr{d}x\nonumber \\
&\le
C\int_{x_0}^{ x_{N/2} }\varepsilon^{-1}e^{-\beta(1-x)/\varepsilon}\mr{d}x
\le 
CN^{-\rho}\nonumber.
\end{align}
Note that $\mr{meas}(\Omega_{s,r})\le CN^{-1}$, then we have
\begin{align}
\Vert (E^I_1)_x \Vert_{L^1(\Omega_{s,r}) }&\le C N\Vert  E^I_1  \Vert_{L^1(\Omega_{s,r}) }\label{eq:EI1x-L1-II}\\
&\le C N \Vert E^I_1 \Vert_{L^{\infty}(\Omega_{s,r}) }\mr{meas}(\Omega_{s,r})\nonumber\\
&\le 
CN^{-\rho}\nonumber
\end{align}
where  inverse estimates  \cite[Theorem 3.2.6]{Ciarlet:1978-finite} have been used.

Now collecting \eqref{eq:EI1-max}, \eqref{eq:EI1y-L1}, \eqref{eq:EI1x-L1-I} and \eqref{eq:EI1x-L1-II}, we prove the inequality $\Vert E^I_1 \Vert_{L^{\infty}(\Omega_{s})}+\Vert \nabla E^I_1 \Vert_{L^{1}(\Omega_{s})}\le CN^{-\rho}$.
\end{proof}

%%%%%%%%%%%%%%%%%%%%%%%%%%%%%%%%%%%%%%%%%%%%%%%%%%%%%%%%%%%%%%%%%%%%%%%%%%%%%%%%%%%%%%%%%%%%%%%%%%%%%%%%%%%%
%
%
%%%%%%%%%%%%%%%%%%%%%%%%%%%%%%%%%%%%%%%%%%%%%%%%%%%%%%%%%%%%%%%%%%%%%%%%%%%%%%%%%%%%%%%%%%%%%%%%%%%%%%%%%%%%%

\section{Supercloseness property on triangular meshes}
In this section, we will estimate each term in 
$a_{SD}(u-u^I,v^N)$ to derive the bound  of $\Vert u^I-u^N\Vert_{SD}$ on the Shishkin triangular mesh $\mathcal{T}_N$.
\begin{lemma}\label{lem: uI-uN-I}
Let $u$ be the solution of \eqref{eq:model problem} that satisfies Assumption \ref{assumption-regularity}, and
$u^I\in V^N$ be the linear interpolation of $u$ on the Shishkin mesh. Then  for all $v^N\in V^N$,  we have
$$ \left| \varepsilon (\nabla (u-u^I), \nabla v^{N})\right|
\le
C  (\varepsilon^{1/4}N^{-3/2}+N^{-3/2} )\ln^{3/2} N\;
\Vert v^{N} \Vert_{SD}.
$$
\end{lemma}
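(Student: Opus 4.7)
The plan is to separate $\varepsilon(\nabla(u-u^I),\nabla v^N)$ into its $x$- and $y$-derivative pieces, decompose $u-u^I=(S-S^I)+(E_1-E_1^I)+(E_2-E_2^I)+(E_{12}-E_{12}^I)$, and then restrict everything to the four subdomains $\Omega_s,\Omega_x,\Omega_y,\Omega_{xy}$. On each subdomain the mesh is piecewise uniform in the $x$- and $y$-directions separately, except across the three transition lines $i=N/2$, $j=N/3$, $j=2N/3$. In the \emph{uniform interior} of each subdomain I would apply Lemma~\ref{lem:superconvergence-diffusion-term} (choosing the first inequality for the $x$-term, the second for the $y$-term, since the required mesh conditions $h_{y,j-1}=h_{y,j}$ and $h_{x,i-1}=h_{x,i}$ hold there). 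In the \emph{transition strips}, which consist of one row or column of elements and therefore have area bounded by an extra $\min\{H,h\}$ factor, I would fall back on Cauchy--Schwarz with the anisotropic interpolation bounds of Lemma~\ref{lem: anisotropic interpolation-triangle}.

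For the test function I would bound
\[
\|v^N_x\|_{L^1(D)}\le |D|^{1/2}\|v^N_x\|_{L^2(D)}\le C|D|^{1/2}\delta_D^{-1/2}\|v^N\|_{SD},\qquad \|v^N_y\|_{L^2}\le \varepsilon^{-1/2}\|v^N\|_{SD},
\]
using the stream\-line piece $\delta_D\|bv^N_x\|^2_D$ of $\|v^N\|^2_{SD}$ for the first and the $\varepsilon|v^N|_1^2$ piece for the second. The $\varepsilon^{1/2}$ prefactor outside will combine with the latter bound to produce the characteristic $\varepsilon^{1/4}$ that appears in the right-hand side.

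Then I would run the derivative estimates of Assumption~\ref{assumption-regularity} term by term. The contribution of $S$ is the easiest, producing $O(\varepsilon N^{-2})$ on the whole domain. The contribution of $E_1$ is exponentially small off $\Omega_x\cup\Omega_{xy}$ via Lemma~\ref{lem:linear interpolation}, and on those subdomains the smallness of $h_x=O(\varepsilon N^{-1}\ln N)$ absorbs the $\varepsilon^{-1}$ blow-up of $\partial_x E_1$. The contribution of $E_2$ over $\Omega_y\cup\Omega_{xy}$ is the largest: here the two uniform-mesh estimates of Lemma~\ref{lem:superconvergence-diffusion-term} give $h_y^2\varepsilon^{-1}$-type terms which, together with the $\sqrt{\sqrt\varepsilon\ln N}$ area factor from $\|v^N_x\|_{L^1(\Omega_y)}$ and with the $\varepsilon^{-1/2}$ in $\|v^N_y\|$, combine to form exactly $\varepsilon^{1/4}N^{-3/2}\ln^{3/2}N$. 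The corner term $E_{12}$ is handled by the $L^1$ bounds for $(E^I_{12})_x$ and $(E^I_{12})_y$ from Lemma~\ref{lem:linear interpolation}, which are small enough to be subsumed.

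The main obstacle will be the bookkeeping across the transition strips. Superconvergence of Lemma~\ref{lem:superconvergence-diffusion-term} requires strict uniformity in one direction, so on the single rows/columns $j\in\{N/3-1,N/3,2N/3-1,2N/3\}$ and $i\in\{N/2-1,N/2\}$ I must degrade to Lemma~\ref{lem: anisotropic interpolation-triangle}; the loss of one power of mesh width has to be compensated by the small measure of these strips and, for the layer terms, by the smallness of $E_k$ itself away from its support. Making sure none of these residual contributions exceeds $\varepsilon^{1/4}N^{-3/2}\ln^{3/2}N$, and in particular confirming that the choice $\rho=2.5$ forces all $N^{-\rho}$ tails from Lemma~\ref{lem:linear interpolation} into the harmless range, is the step that requires the most care.
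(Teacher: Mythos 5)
Your overall architecture coincides with the paper's: decompose $u-u^I$ by solution components and subdomains, apply the superconvergence inequality of Lemma~\ref{lem:superconvergence-diffusion-term} where the mesh is locally uniform in the direction required by that lemma, fall back on H\"older's inequality with Lemma~\ref{lem: anisotropic interpolation-triangle} and the small measure of the single transition rows/columns, and dispose of the layer components away from their layers via Lemma~\ref{lem:linear interpolation}. Two points in your plan, however, do not survive scrutiny.

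First, the bound $\Vert v^N_x\Vert_{L^1(D)}\le C|D|^{1/2}\delta_D^{-1/2}\Vert v^N\Vert_{SD}$ cannot be used for the diffusion term: the right-hand side of the lemma contains no $\delta$, and the results that invoke it permit $\delta_x=\delta_{xy}=0$ (indeed this choice is made in Theorem~\ref{theorem:superconvergence-Hybrid mesh-CL} and in the computations), for which your estimate is vacuous. Here $v^N_x$ must be controlled exactly like $v^N_y$, through $\Vert \nabla v^N\Vert\le\varepsilon^{-1/2}\Vert v^N\Vert_{SD}$; the factor $\varepsilon$ multiplying $(\nabla(u-u^I),\nabla v^N)$ is what makes the resulting negative powers of $\varepsilon$ affordable. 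The streamline piece $\delta_K\Vert bv^N_x\Vert^2_K$ is needed only for the convection and stabilization terms.

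Second, you have misidentified the dominant contribution, and your stated mechanism for the term that actually dominates is too weak. The $\varepsilon$-free $N^{-3/2}\ln^{3/2}N$ in the statement --- the term that caps the order at $3/2$ and motivates the hybrid mesh of Theorem~\ref{theorem:superconvergence-Hybrid mesh-CL} --- comes from $\varepsilon\left((E_1-E_1^I)_x,\,v^N_x\right)_{\Omega_x}$ (the paper's $\mathrm{III}_5$), not from $E_2$ on $\Omega_y$, which only yields the smaller $\varepsilon^{1/4}N^{-3/2}\ln^{3/2}N$. Your claim that on $\Omega_x$ ``the smallness of $h_x$ absorbs the $\varepsilon^{-1}$ blow-up of $\partial_xE_1$'' amounts to Cauchy--Schwarz with Lemma~\ref{lem: anisotropic interpolation-triangle}, which gives $\varepsilon\Vert(E_1-E_1^I)_x\Vert_{\Omega_x}\Vert v^N_x\Vert_{\Omega_x}\le C\varepsilon\cdot\varepsilon^{-1/2}N^{-1}\ln N\cdot\varepsilon^{-1/2}\Vert v^N\Vert_{SD}=CN^{-1}\ln N\,\Vert v^N\Vert_{SD}$ --- only first order. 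To close the estimate you must apply Lemma~\ref{lem:superconvergence-diffusion-term} to this term as well (pairing $K^1_{i,j}$ with $K^2_{i,j-1}$ in the $y$-uniform interior of $\Omega_x$), use $\Vert v^N_x\Vert_{L^1(\Omega_x)}\le(\mathrm{meas}\,\Omega_x)^{1/2}\varepsilon^{-1/2}\Vert v^N\Vert_{SD}\le C\ln^{1/2}N\,\Vert v^N\Vert_{SD}$, and treat the transition rows $j=N/3$, $j=2N/3$ by the small-measure argument; the latter rows are precisely what produce $\varepsilon^{-1}N^{-3/2}\ln^{3/2}N$ before the factor $\varepsilon$ is restored.
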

\begin{proof}
Recalling  the decomposition \eqref{eq:(2.1a)}, we set  $E=E_{1}+E_{2}+E_{12}$. Then we have
\begin{equation*}
( \nabla (u-u^I), \nabla v^{N} )=\mr{I}+\mr{II}+\mr{III},
\end{equation*}
where
\begin{align*}
\mr{I}:=&\left(  \nabla (E_{2}-E^I_{2}), \nabla  v^{N}\right)_{\Omega_{y} }+\left( \nabla (E-E^I),  \nabla v^{N} \right)_{\Omega_{xy} }+\left(  \nabla (S-S^I), \nabla v^{N} \right)_{\Omega_s\cup\Omega_y},\\
\mr{II}:= &\left(  \nabla (E-E^I), \nabla v^{N} \right)_{\Omega_s}
+\left(  \nabla (E_1-E^I_1), \nabla v^{N} \right)_{\Omega_y}+\left(  \nabla (S-S^I), \nabla v^{N} \right)_{ \Omega_{x y} }\\
&
+\left(  \nabla (E_{12}-E^I_{12}), \nabla v^{N} \right)_{\Omega_y},\\
\mr{III}:=&
\left(  \nabla (S-S^I), \nabla v^{N} \right)_{\Omega_x}
+\left(  (E_{1}-E^I_{1})_{y}, v^{N}_{y}\right)_{\Omega_x} 
+\left(  \nabla (E_2-E^I_2), \nabla v^{N} \right)_{\Omega_x}\\
&+\left(  \nabla (E_{12}-E^I_{12}), \nabla v^{N} \right)_{\Omega_x}+\left(  (E_{1}-E^I_{1})_{x}, v^{N}_{x}\right)_{\Omega_{x} }\\
=:&\mr{III}_1+\ldots+\mr{III}_4+\mr{III}_5.
\end{align*}

The estimates of $\mr{I}$ depend on Lemma \ref{lem:superconvergence-diffusion-term}.  Here we just present the detailed analysis for $((E_2-E^I_2)_y, v^N_y)_{\Omega_{y}^{d} }$ where $\Omega_{y}^{d}:=[0,1-\lambda_x]\times [0,\lambda_y]$,  since the other terms can be analyzed in a similar way.  First, we have

\begin{align*}
((E_2-E^I_2)_y, v^N_y)_{\Omega_{y}^{d} }
&=\sum_{i=0}^{N/2-1}\sum_{j=0}^{N/3-1}
\sum_{m=1}^2 ((E_2-E^I_2)_y, v^N_y)_{ K^{m}_{i,j} }\\
&=\sum_{j=0}^{N/3-1} ((E_2-E^I_2)_y, v^N_y)_{K^{1}_{0,j} }+\sum_{j=0}^{N/3-1} ((E_2-E^I_2)_y, v^N_y)_{K^{2}_{N/2-1,j} }\\
&\;\;\;+\sum_{i=1}^{N/2-1}\sum_{j=0}^{N/3-1}
((E_2-E^I_2)_y, v^N_y)_{ K^{2}_{i-1,j} \cup K^{1}_{i,j} }\\
&=:\mr{T}_1+\mr{T}_2+\mr{T}_3.
\end{align*}
Considering $v^N|_{\partial\Omega}=0$, we have
\begin{equation}\label{eq: diffusion-III-T1}
\mr{T}_1=0.
\end{equation}
H{\"o}lder inequalities and Lemma \ref{lem: anisotropic interpolation-triangle} yield 
\begin{align}
|\mr{T}_2|
&\le
 C \Vert  (E_2-E^I_2)_y\Vert_{L^{\infty}(\Omega_{y,r})}  \cdot \Vert v^N_y \Vert_{L^{1}(\Omega_{y,r})}\label{eq: diffusion-III-T2}\\
&\le
 C \varepsilon^{-1/2} N^{-1}\ln N\cdot \varepsilon^{1/4}N^{-1/2}\ln^{1/2}N\Vert v^N_y \Vert_{\Omega_y}
\nonumber\\
&\le
 C \varepsilon^{-3/4} N^{-3/2}\ln^{3/2} N\cdot \varepsilon^{1/2}\Vert v^N_y \Vert_{\Omega_y},
\nonumber
\end{align}
where $\Omega_{y,r}=\bigcup_{j=0}^{N/3-1}K_{N/2-1,j}^{2}$  and we have used
$\mr{meas}(\Omega_{y,r})\le C\varepsilon^{1/2} N^{-1}\ln N$.  
Using  Lemma \ref{lem:superconvergence-diffusion-term}, we obtain
\begin{align}
|\mr{T}_3|
&\le
 C \varepsilon^{-1/2} N^{-2}\ln^2 N\cdot \Vert v^N_y \Vert_{L^{1}(\Omega_{y}^{d})}\label{eq: diffusion-III-T3}\\
&\le
 C \varepsilon^{-1/2} N^{-2}\ln^2 N\cdot \varepsilon^{1/4}\ln^{1/2}N\Vert v^N_y \Vert_{\Omega_y}
\nonumber\\
&\le
 C \varepsilon^{-3/4} N^{-2}\ln^{5/2} N\cdot \varepsilon^{1/2}\Vert v^N_y \Vert_{\Omega_y}.
\nonumber
\end{align}
From \eqref{eq: diffusion-III-T1}---\eqref{eq: diffusion-III-T3}, we obtain
$$
|((E_2-E^I_2)_y, v^N_y)_{\Omega_y}|
\le 
 C \varepsilon^{-3/4} N^{-3/2}\ln^{3/2} N\;
\Vert v^{N} \Vert_{SD}.
$$
Similarly, we can estimate the remained terms in $\mr{I}$  and   obtain
\begin{equation}\label{eq: diffusion-I}
|\mr{I}|\le C \varepsilon^{-3/4}N^{-3/2}\ln^{3/2} N\;\Vert v^{N} \Vert_{SD}.
\end{equation}

Analysis of $\mr{II}$ depends on   Lemmas \ref{lem: anisotropic interpolation-triangle}, \ref{lem:linear interpolation} and/or smallness  of layer functions and layer domains.
For example, inverse estimates  \cite[Theorem 3.2.6]{Ciarlet:1978-finite} and Lemma \ref{lem:linear interpolation} yield
\begin{align*}
|\left(  \nabla (E-E^I), \nabla v^{N} \right)_{\Omega_s}|
\le &  \Vert  \nabla (E-E^I)  \Vert_{L^{1}(\Omega_{s})} \Vert \nabla v^{N} \Vert_{L^{\infty}(\Omega_{s})}
\\
\le & C N^{-\rho}\cdot N \Vert \nabla v^{N} \Vert_{\Omega_{s}}\\
\le & C\varepsilon ^{-1/2}N^{1-\rho} \Vert v^{N} \Vert_{SD}.
\end{align*}
Thus we obtain
\begin{equation}\label{eq: diffusion-II}
|\mr{II}|\le C(\varepsilon ^{-1/2}+ \varepsilon ^{-1/4}\ln^{1/2} N+\varepsilon ^{-3/4} N^{-1 }  \ln^{1/2} N)N^{1-\rho}\Vert v^{N} \Vert_{SD}.
\end{equation}

The analysis of $\mr{III}_1$--$\mr{III}_4$ is similar to one of $\mr{II}$ and the estimate of $\mr{III}_5$ is similar to one of $\mr{I}$. Thus we have
\begin{align}
&|\mr{III}_1|+\ldots+|\mr{III}_4|\le
C(N^{-1}\ln^{3/2} N+\varepsilon^{-1}N^{-\rho}\ln^{1/2} N)\Vert v^N \Vert_{SD},\\
&|\mr{III}_5|\le C \varepsilon^{-1}N^{-3/2}\ln^{3/2}N  \Vert v^N \Vert_{SD}.
\label{eq:III-5}
\end{align}

Collecting \eqref{eq: diffusion-I}--\eqref{eq:III-5},  the proof is done.
\end{proof}
%~\\
%{\color{red}\textbf{Problems: How to choose  $\delta_s$ and $\delta_y$ such that the optimal $L^2$ convergence order could be obtained and $\Vert u^I-u^N \Vert_{\varepsilon}\le CN^{-3/2}$.}}\\
%~\\
%{\color{red}\textbf{Problems: For characteristic layers problems, must we  choose $\delta_s=N^{-1}$?}}
\begin{lemma}\label{lem: uI-uN-II}
Let $u$ be the solution of \eqref{eq:model problem} that satisfies Assumption \ref{assumption-regularity}, and
$u^I\in V^N$ be the linear interpolation of $u$ on the Shishkin mesh.  Then  for all $v^N\in V^N$,  we have
\begin{equation}\label{eq:CR term in uI-uN}
\left| 
(b(u-u^{ I })_x+c(u-u^{I }), v^{N})\right|
\le C(A_s+A_y+(1+\varepsilon^{1/4}\ln N)
N^{-2}\ln^{5/2}N) \Vert v^N \Vert_{SD},
\end{equation}
where
\begin{align*}
A_s:=\min\{N^{-\rho}\delta_s^{-1/2}, N^{1-\rho} \},
A_y:=\varepsilon^{1/4}\min\{N^{-\rho}\delta_y^{-1/2}, N^{1-\rho} \} \ln^{1/2} N.
\end{align*}
\end{lemma}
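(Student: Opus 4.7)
I follow the same decomposition as in Lemma \ref{lem: uI-uN-I}: write $u = S + E_1 + E_2 + E_{12}$, split the form over $\Omega_s, \Omega_x, \Omega_y, \Omega_{xy}$, and handle the reaction and convection pieces separately. The reaction term $(c(u-u^I),v^N)$ is the easy one: H\"older's inequality together with the $L^2$ interpolation estimates of Lemmas \ref{lem: anisotropic interpolation-triangle} and \ref{lem:linear interpolation} yield a bound of order $N^{-2}\ln^2 N\,\Vert v^N\Vert$, and $\Vert v^N\Vert$ is absorbed into $\mu_0^{-1/2}\Vert v^N\Vert_{SD}$. This contribution is already inside the $N^{-2}\ln^{5/2}N$ term on the right-hand side.

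The real work is the convection term $(b(u-u^I)_x,v^N)$. Since $v^N|_K$ is linear, on each triangle $K$ I write $v^N = v^N(x_\star,y_\star)+v^N_x(x-x_\star)+v^N_y(y-y_\star)$ at a reference vertex, freeze $b$ at $(x_\star,y_\star)$, and pair adjacent triangles $K^\alpha_{i-1,j}$ and $K^\alpha_{i,j}$ (or the corresponding $y$-direction pair) inside the interior of each subdomain, where the mesh is uniform. The constant-$v^N$ contribution then fits Lemma \ref{lem:superconvergence-convection-term} and provides the supercloseness cancellation; the linear correction and the smooth remainder $b-b(x_\star,y_\star)$ each carry an extra power of $h_{x,i}$ or $h_{y,j}$ and, after H\"older, are absorbed into $\delta_K^{1/2}\Vert bv^N_x\Vert_K$ from the SD norm. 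Applied to the resolved parts ($S$ everywhere, $E_2$ on $\Omega_y\cup\Omega_{xy}$, $E_1,E_{12}$ on $\Omega_x\cup\Omega_{xy}$), this produces the $(1+\varepsilon^{1/4}\ln N)N^{-2}\ln^{5/2}N$ piece, the $\varepsilon^{1/4}$ coming from $|\Omega_y|^{1/2}\sim\varepsilon^{1/4}\ln^{1/2}N$ whenever an $L^2$-in-$y$ estimate is invoked.

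For the unresolved layer contributions ($E_1,E_{12}$ on $\Omega_s\cup\Omega_y$; $E_2$ on $\Omega_s$), Lemma \ref{lem:linear interpolation} gives $\Vert E^I\Vert_{L^\infty}+\Vert\nabla E^I\Vert_{L^1}\le CN^{-\rho}$ on the relevant subdomain. For these terms I use one of two estimates: (i) H\"older in $L^2\times L^2$ combined with $\Vert bv^N_x\Vert_K\le\delta_K^{-1/2}\Vert v^N\Vert_{SD}$, which gives an $N^{-\rho}\delta^{-1/2}$ contribution; or (ii) the inverse estimate $\Vert v^N_x\Vert_{L^\infty(K)}\le CN\Vert v^N\Vert_K$ paired with the $\mu_0$-part of the SD norm, which gives $N^{1-\rho}$. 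Taking the minimum produces the $\min\{N^{-\rho}\delta^{-1/2},N^{1-\rho}\}$ inside $A_s$ and $A_y$, while the extra $\varepsilon^{1/4}\ln^{1/2}N$ in $A_y$ is again the factor $|\Omega_y|^{1/2}$. The main technical obstacle will be the transition columns and rows $i=N/2$, $j=N/3,\,2N/3$, where Lemma \ref{lem:superconvergence-convection-term} no longer applies because $h_{x,i-1}\neq h_{x,i}$ or $h_{y,j-1}\neq h_{y,j}$; on these one-cell-wide strips the cancellation is lost and the contributions have to be estimated directly via Lemma \ref{lem: anisotropic interpolation-triangle}, with the narrow measure of the strip (or the $N^{-\rho}$-smallness of the unresolved layer) compensating the lost order and keeping the boundary contributions inside the leading bound.
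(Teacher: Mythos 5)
Your treatment of the reaction term and of the unresolved layer contributions (the $A_s$ and $A_y$ pieces, obtained by playing $\delta_K^{-1/2}$ against an inverse estimate) matches the paper and is fine. The genuine gap is in the resolved convection contributions ($S$ on $\Omega_s\cup\Omega_y$, $E_2$ on $\Omega_y$), where you keep the derivative on $u-u^I$ and expand $v^N=v^N(x_\star,y_\star)+v^N_x(x-x_\star)+v^N_y(y-y_\star)$. Two things break. First, the linear correction produces, already for $S$ on $\Omega_s$, a term of size $CN^{-2}\Vert v^N_y\Vert_{L^1(\Omega_s)}$; the SD norm only controls $\varepsilon^{1/2}\Vert v^N_y\Vert$ (or $\Vert v^N\Vert$ via an inverse estimate costing a factor $N$), so absorbing it costs $\varepsilon^{-1/2}$ or $N$ — your plan only mentions absorbing into $\delta_K^{1/2}\Vert bv^N_x\Vert_K$ and never addresses the crosswind derivative. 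Second, even the constant-$v^N$ part does not close: to exploit Lemma \ref{lem:superconvergence-convection-term} with $p=1$ you must sum by parts in $i$, and the increments are then $v^N(x_{i+1},\cdot)-v^N(x_i,\cdot)=H_xv^N_x$, while the partial sums $\sum_{k\le i}\int_{K_k}(S-S^I)_x$ are only $O(N^{-2})$ (each summand is $O(N^{-3})$ and only \emph{consecutive differences} are small). You therefore land on $CN^{-2}\Vert v^N_x\Vert_{L^1(\Omega_s)}$ at best, i.e.\ $CN^{-2}\delta_s^{-1/2}\Vert v^N\Vert_{SD}$ or $CN^{-1}\Vert v^N\Vert_{SD}$; with $\delta_s=C^*N^{-1}$ (the choice needed for Theorem \ref{theorem:superconvergence-Hybrid mesh-CL}) this is $N^{-3/2}$, which exceeds the claimed $N^{-2}\ln^{5/2}N$ and is not covered by $A_s=\min\{N^{-\rho}\delta_s^{-1/2},N^{1-\rho}\}$ either.

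The missing idea is to integrate by parts \emph{first}: $(b(u-u^{I})_x+c(u-u^{I}),v^N)=-(b(u-u^{I}),v^N_x)+((c-b_x)(u-u^{I}),v^N)$. This does two essential things. It lowers the derivative on the interpolation error, so that after writing $v^N_x\vert_K=H_x^{-1}\bigl(v^N(x_{i+1},y_{j+m-1})-v^N(x_i,y_{j+m-1})\bigr)$ and Abel-summing in $i$, the telescoped quantity is $(S-S^{I},b)_{K^m_{i-1,j}}-(S-S^{I},b)_{K^m_{i,j}}$, which Lemma \ref{lem:superconvergence-convection-term} with $p=q=0$ bounds by $O(N^{-5})$ (one order better than the $p=1$ case). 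And it leaves behind the nodal values $v^N(x_i,y_{j+m-1})$ rather than differences of them, so the final sum is $H_x^{-1}N^{-5}\sum_{i,j}|v^N(x_i,y_j)|\le CN^{-2}\Vert v^N\Vert_{L^1(\Omega_s)}$, controlled by the $\mu_0\Vert v^N\Vert^2$ part of the SD norm with no loss of $\delta^{-1/2}$ or $\varepsilon^{-1/2}$. The summation by parts also produces a boundary term at $x=x_{N/2}$ (your ``transition column''), which is handled not by the narrowness of a strip but by writing $v^N(x_{N/2},y_j)=-\int_{x_{N/2}}^{1}v^N_x(x,y_j)\,\mathrm{d}x$ and using $\Vert v^N_x\Vert_{L^1(\Omega_x)}\le C\varepsilon^{1/2}\ln^{1/2}N\,\Vert v^N_x\Vert_{\Omega_x}$. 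Without the initial integration by parts your argument cannot reach the stated bound.
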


\begin{proof}
Integration by parts  yields
\begin{align*}
(b(u-u^{ I })_x + c(u-u^{I }), v^{N})
&=-(b(u-u^I),v^N_x)+( (c-b_x)(u-u^{I }), v^{N} ).
\end{align*}
Lemma \ref{lem:linear interpolation}  yields
\begin{equation}\label{eq:C-0}
|( (c-b_x)(u-u^{I }), v^{N} )|\le C N^{-2}\ln^2N  \Vert v^N \Vert \le   C N^{-2}\ln^2N \Vert v^N \Vert_{SD}.
\end{equation} 
Recalling  the decomposition \eqref{eq:(2.1a)} and setting $E=E_{1}+E_{2}+E_{12}$, we have
$$
(b(u-u^I),v^N_x)=\mathcal{I}+\mathcal{II},
$$
where
\begin{align*}
\mathcal{I}:=&(b(E-E^I),v^N_x)_{\Omega_s}
+(b(u-u^I),v^N_x)_{\Omega_x\cup\Omega_{xy} }\\
&+(b(E_1+E_{12}-(E^I_1+E^I_{12}) ,v^N_x)_{\Omega_y},\\
\mathcal{II}
:=&(b(S-S^I),v^N_x)_{\Omega_s}
+(b(S-S^I),v^N_x)_{\Omega_y}+(b(E_2-E^I_2),v^N_x)_{\Omega_y}.
\end{align*} 

%\textbf{Analysis of $\mathcal{I}$}:\\
The analysis of $\mathcal{I}$ is similar to one of $\mr{II}$ in Lemma \ref{lem: uI-uN-I}. 
For example, 
\begin{align*}
|(b(E-E^I),v^N_x)_{\Omega_s}|
&\le
CN^{-\rho} \Vert v^N_x \Vert_{L^{1}(\Omega_s)}
\le
CN^{-\rho} \Vert v^N_x \Vert_{ \Omega_s }\\
&\le
\left\{
\begin{array}{l}
CN^{-\rho}\delta_s^{-1/2}\cdot \delta^{1/2}_s \Vert v^N_x \Vert_{ \Omega_s }\\
CN^{1-\rho}\cdot \Vert v^N \Vert_{ \Omega_s }
\end{array}
\right.
.
\end{align*}
%\begin{equation}
%|(b(E-E^I),v^N_x)_{\Omega_s}|
%\le
%CN^{-\rho} \Vert v^N_x \Vert_{L^{1}(\Omega_s)}
%\le
%CN^{1-\rho}  \Vert v^N \Vert_{\Omega_s}.
%\end{equation}
%\begin{equation}
%|(b(u-u^I),v^N_x)_{ \Omega_x\cup\Omega_{xy} }|
%\le
%C N^{-2}\ln N \Vert v^N_x \Vert_{
%L^1(\Omega_x\cup\Omega_{xy})}
%\le 
%C N^{-2}\ln^{1/2}N \cdot
%\varepsilon^{1/2}\Vert v^N_x \Vert_{
%\Omega_x\cup\Omega_{xy}}
%\le 
%C N^{-2}\ln^{1/2}N  \Vert v^N\Vert_{SD}
%\end{equation}
%
%\begin{align*}
%&\left(E_1+E_{12}-(E_1^I+E^I_{12}), v^N_x \right)_{\Omega_y }\\
%&\le
%C\varepsilon N^{-\rho}\varepsilon^{1/2}\ln N\cdot\Vert v^N_x \Vert_{L^{\infty}(\Omega_y)}\\
%&\le
%C\varepsilon^{3/2}N^{-\rho}\ln N\cdot (\varepsilon^{1/2}N^{-2}\ln N)^{-1/2} \Vert v^N_x \Vert_{\Omega_y}\\
%&\le
%C\varepsilon^{3/4}N^{1-\rho}\ln^{1/2} N\cdot
%\varepsilon^{1/2}\Vert v^N_x \Vert_{\Omega_y}
%\end{align*}
%where we have used (It could be derived in the same way as \cite[3. 10]{Styn1Tobi2:2003-SDFEM})
Thus, we have
\begin{equation}\label{eq:C-I}
|\mathcal{I}|\le  C ( A_s +A_y+
N^{-2}\ln^{5/2}N) \Vert v^N \Vert_{SD},
\end{equation}
where
$
A_s:=\min\{N^{-\rho}\delta_s^{-1/2}, N^{1-\rho} \}$,
$A_y:=\varepsilon^{1/4}\min\{N^{-\rho}\delta_y^{-1/2}, N^{1-\rho} \} \ln^{1/2} N.
$
%
%\textbf{Analysis of $\mathcal{II}$}:\\

Next we are to analyze $\mathcal{II}$. Lemmas \ref{lem:superconvergence-convection-term}  and \ref{lem: anisotropic interpolation-triangle} yield
\begin{align}
&\left| 
(S-S^{I},w)_{ K^{m}_{i-1,j-(m-1)} }
-(S-S^{I},w)_{ K^{m}_{i,j-(m-1)} }
\right|
\label{eq:T3-part}\\
&\le
\left|(S-S^{I},w(x_i,y_j))_{ K^{m}_{i-1,j-(m-1)} }
-(S-S^{I},w(x_i,y_j))_{ K^{m}_{i,j-(m-1)} }\right|
\nonumber\\
&\;\;\;+\sum_{k=i-1}^{i}\left|(S-S^{I},w-w(x_i,y_j) )_{ K^{m}_{k,j-(m-1)} }\right|
\le
CN^{-5},
\nonumber
\end{align}
where $m=1$ or $2$,  $1\le i\le N/2-1$ and $N/3\le j-(m-1)\le 2N/3-1$. Also we have used  $w\in C^1(K)$, $\Vert w\Vert_{C^1(K)}\le C$  and  $\Vert w-w(x_i,y_j) \Vert_{L^{\infty}(K)}\le CN^{-1}$ where $K=K^{m}_{k,j-(m-1)}\subset\Omega_s$.
We decompose the first term of $\mathcal{II}$ as follows:
\begin{align*}
&(b(S-S^I),v^N_x)_{\Omega_s}
=\sum_{i=0}^{N/2-1}\sum_{j=N/3}^{2N/3-1}\sum_{m=1}^{2}
(S-S^{I},b v^{N}_{x})_{ K^{m}_{i,j} }\\
=&\frac{1}{H_x}\sum_{i=0}^{N/2-1}\sum_{j=N/3}^{2N/3-1}\sum_{m=1}^{2}
\left( S-S^{I},b (v^N(x_{i+1},y_{j+m-1})-v^N(x_i,y_{j+m-1}) ) \right)_{ K^{m}_{i,j} }
\\
=&-\frac{1}{H_x}\sum_{j=N/3}^{2N/3-1}\sum_{m=1}^{2}
(S-S^{I},b v^{N}(x_0,y_{j+m-1}) )_{K^{m}_{0,j}}\\
&+\frac{1}{H_x}\sum_{j=N/3}^{2N/3-1}\sum_{m=1}^{2}
(S-S^{I},b v^{N}(x_{N/2},y_{j+m-1}))_{K^{m}_{N/2-1,j}}
\\
&+\frac{1}{H_x}\sum_{j=N/3}^{2N/3-1}\sum_{i=1}^{N/2-1}\sum_{m=1}^{2}
v^{N}(x_{i},y_{j+m-1})\left(
(S-S^{I},b )_{K^{m}_{i-1,j}}
-(S-S^{I},b )_{K^{m}_{i,j}}
\right)
\\
=:&\mathscr{T}_1+\mathscr{T}_2+\mathscr{T}_3.
\end{align*}
Considering $w^{N}|_{\partial\Omega}=0$, we have
\begin{equation}\label{eq:T1}
\mathscr{T}_{1}=0.
\end{equation}
Note that $x_{N/2}=1-\lambda_{x}$, we have
\begin{align}
|\mathscr{T}_{2}|&\le
CN^{-3} \sum_{j=N/3}^{2N/3-1} |v^{N}(x_{N/2},y_{j})| 
\le
CN^{-3}\sum_{j=N/3}^{2N/3-1}
\left\vert \int_{ x_{N/2} }^{1} v^{N}_{x}(x,y_{j})\mr{d}x\right\vert
\label{eq:T2}\\
&\le
CN^{-3}\sum_{j=N/3}^{2N/3-1} \sum_{i=N/2}^{N-1}
\int_{ x_i }^{ x_{i+1} } \left\vert v^{N}_{x}(x,y_{j}) \right\vert \mr{d}x
\nonumber\\
&\le
CN^{-3}\cdot H^{-1}_{y}\sum_{j=N/3}^{2N/3-1}
\sum_{i=N/2}^{N-1}\sum_{m=1}^{2}\Vert v^{N}_{x} \Vert_{L^{1}(K^{m}_{i,j})}\le
CN^{-2}\Vert v^{N}_{x} \Vert_{L^{1}(\Omega_{x})}
\nonumber\\
&\le
CN^{-2}\cdot \varepsilon^{1/2}\ln^{1/2} N
\Vert v^{N}_{x} \Vert_{\Omega_{x}}
\le
CN^{-2}\ln^{1/2} N \Vert  v^{N} \Vert_{SD }.
\nonumber
\end{align}
 Using \eqref{eq:T3-part}, we obtain
\begin{align}
|\mathscr{T}_{3}|
&\le
C\frac{1}{H_{x}} \sum_{i=1}^{N/2-1} \sum_{j=N/3}^{2N/3-1} \sum_{m=1}^{2}
N^{-5}|v^{N}(x_{i},y_{j+m-1})|
\label{eq:T3}\\
&\le
CN^{-2} \Vert v^{N}\Vert_{L^{1}(\Omega_{s})}
\le
CN^{-2}\Vert v^{N}  \Vert_{SD}.
\nonumber
\end{align}
Collecting \eqref{eq:T1},\eqref{eq:T2} and \eqref{eq:T3}, we have
\begin{equation}
|(b(S-S^I),v^N_x)_{\Omega_s}|\le CN^{-2} \ln^{1/2} N \Vert v^{N}  \Vert_{SD}.
\end{equation}
Similarly, using  Lemma \ref{lem:superconvergence-convection-term} we have the estimates of the other terms of $\mathcal{II}$:
\begin{align*}
|(b(S-S^I),v^N_x)_{\Omega_y}|
&\le
C\varepsilon^{1/4}N^{-2}\ln N \Vert v^N \Vert_{SD}, \\
 |(b(E_2-E^I_2),v^N_x)_{\Omega_y}| 
&\le
C\varepsilon^{1/4}N^{-2}\ln^{7/2} N \Vert v^N \Vert_{SD}.
\end{align*}
Thus, we have
\begin{equation}\label{eq:C-II}
|\mathcal{II}|\le C N^{-2}(1+\varepsilon^{1/4}\ln^3 N)  \ln^{1/2} N\Vert v^N \Vert_{SD} .
\end{equation}

Collecting \eqref{eq:C-0}, \eqref{eq:C-I} and \eqref{eq:C-II}, the proof is done. 
\end{proof}

\begin{lemma}\label{lem: uI-uN-III}
Let Assumption \ref{assumption-regularity} hold true.
Suppose the stabilization parameter $\delta$ satisfies
\eqref{eq: delta-K}, then
\begin{align}
\left| 
a_{stab}(u-u^{I }, v^{N})
\right|
\le &
C (\delta_s\varepsilon\ln^{1/2}N+\delta^{1/2}_s  N^{-3/2} )   \Vert v^N\Vert_{SD}
\label{eq:stabilization term in uI-uN}\\
&+C\varepsilon^{1/4}(\delta_y+\delta^{1/2}_yN^{-3/2}\ln^{1/2} N)\ln N
\Vert v^N\Vert_{SD}
\nonumber\\
&+C\varepsilon^{-1}\delta_x\ln^{1/2}N
\Vert v^N\Vert_{SD}+C\varepsilon^{-3/4}\delta_{xy}\ln N
\Vert v^N\Vert_{SD}.
\nonumber
\end{align}

\end{lemma}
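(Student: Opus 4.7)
My plan is to split $a_{stab}(u-u^I,v^N)$ into a diffusion contribution and a convection--reaction contribution, then estimate each on the four subdomains $\Omega_s,\Omega_y,\Omega_x,\Omega_{xy}$, decomposing $u$ as in Assumption~\ref{assumption-regularity}. Since $u^I|_K\in P_1(K)$, we have $\Delta u^I\equiv 0$ on every triangle, so I can write $a_{stab}(u-u^I,v^N)=-\varepsilon\sum_K(\Delta u,\delta_Kbv^N_x)_K+\sum_K(b(u-u^I)_x+c(u-u^I),\delta_Kbv^N_x)_K$. Because $\delta_K$ is constant on each of the four subdomains, each regional contribution comes with a single $\delta$ factor, which matches the structure on the right-hand side.

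For the diffusion piece I would apply Cauchy--Schwarz on each subdomain together with the SD-norm reservoir bound $\|bv^N_x\|_{\Omega_\cdot}\leq C\delta_\cdot^{-1/2}\|v^N\|_{SD}$, and estimate $\|\Delta u\|_{L^2(\Omega_\cdot)}$ component by component using Assumption~\ref{assumption-regularity}: in $\Omega_s$ the smooth part $S$ dominates and gives an $O(1)$ bound, with the $\ln^{1/2}N$ coming from the residual layer contributions on the outflow portion of $\Omega_s$; in $\Omega_x$ (width $\lambda_x=O(\varepsilon\ln N)$) the exponential layer $E_1$ yields $O(\varepsilon^{-3/2})$; in $\Omega_y$ (width $\lambda_y=O(\sqrt{\varepsilon}\ln N)$) the parabolic layer $E_2$ yields an $O(\varepsilon^{-3/4})$-type bound; and in $\Omega_{xy}$ the corner layer $E_{12}$ combines both scales. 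Layer terms evaluated outside their own region contribute an exponentially small prefactor $N^{-\rho}$ which, for $\rho=2.5$, is absorbed into the dominant terms.

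For the convection--reaction piece I would decompose $u-u^I$ componentwise into $(S-S^I)+(E_1-E_1^I)+(E_2-E_2^I)+(E_{12}-E_{12}^I)$, apply Cauchy--Schwarz subdomain by subdomain, and bound the $L^2$ norms of $b(u-u^I)_x$ and $c(u-u^I)$ using the anisotropic interpolation estimates of Lemma~\ref{lem: anisotropic interpolation-triangle} together with the $L^\infty$/$L^1$ layer bounds of Lemma~\ref{lem:linear interpolation}. Extracting the factor $\delta_\cdot^{-1/2}\|v^N\|_{SD}$ from $\|bv^N_x\|_{\Omega_\cdot}$ leaves a net $\delta_\cdot^{1/2}$ weight on the interpolation error, which is the origin of the $\delta_s^{1/2}N^{-3/2}$ and $\varepsilon^{1/4}\delta_y^{1/2}N^{-3/2}\ln^{1/2}N$ contributions on the right-hand side; the $N^{-3/2}$ rates come from using piecewise linear interpolation together with the anisotropic factors $h_x,h_y$ on each strip.

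The main obstacle is the bookkeeping: four subdomains times four solution components times two stabilization terms, each demanding the correct balance between the pointwise layer amplitude dictated by Assumption~\ref{assumption-regularity}, the width $\lambda_\cdot$ of the corresponding strip, the anisotropic mesh sizes $(h_x,h_y)$ or $(H_x,H_y)$, and the regional weight $\delta_\cdot^{1/2}$ arising from the SD-norm coupling. Verifying in each of these subcases that the leftover off-layer exponentials $N^{-\rho}$ are indeed dominated by the four asymptotic terms listed in the lemma is the step most likely to cause trouble, especially in $\Omega_{xy}$ where $E_{12}$ carries the worst scaling $\varepsilon^{-3/4}$.
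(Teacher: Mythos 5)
Your overall decomposition (using $\Delta u^I=0$ to isolate $-\varepsilon(\Delta u,\delta b v^N_x)$ and then treating the convection--reaction residual subdomain by subdomain with the layer splitting of Assumption~\ref{assumption-regularity}) matches the paper's, but your plan for the convection--reaction part has a genuine gap. You propose to bound $\Vert b(u-u^I)_x\Vert_{L^2(\Omega_\cdot)}$ by the anisotropic interpolation estimates of Lemma~\ref{lem: anisotropic interpolation-triangle} and pair it with the reservoir $\Vert bv^N_x\Vert_{\Omega_\cdot}\le \delta_\cdot^{-1/2}\Vert v^N\Vert_{SD}$. Plain interpolation theory, however, gives only $\Vert (S-S^I)_x\Vert_{L^2(\Omega_s)}\le C(H_x+H_y)\le CN^{-1}$ --- the gradient of the interpolation error is first order, not order $3/2$ --- so this route yields $\delta_s^{1/2}N^{-1}\Vert v^N\Vert_{SD}$ rather than the claimed $\delta_s^{1/2}N^{-3/2}\Vert v^N\Vert_{SD}$; with $\delta_s\sim N^{-1/2}$ as in Theorem~\ref{eq: uI-uN varepsilon} that is only $N^{-5/4}$ and would destroy the supercloseness result. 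The extra half order is not bookkeeping: it comes from a cancellation between adjacent triangles that Cauchy--Schwarz throws away. The paper estimates $(b(u-u^I)_x,\delta bv^N_x)$ by repeating the argument of Lemma~\ref{lem: uI-uN-I}, i.e.\ by invoking the integral inequality of Lemma~\ref{lem:superconvergence-diffusion-term} on the paired triangles $\mathcal{Q}_{i,j}$ and $\mathcal{S}_{i,j}$ (after freezing $b$ pointwise as in \eqref{eq:T3-part} so that the remaining test factor is piecewise constant), and it estimates $(c(u-u^I),\delta bv^N_x)$ by reusing the bounds for $(b(u-u^I),v^N_x)$ from Lemma~\ref{lem: uI-uN-II}, which rest on Lemma~\ref{lem:superconvergence-convection-term} and a summation by parts in $i$. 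Neither device appears in your proposal, and without them the stated rates are unreachable.

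A secondary issue concerns the diffusion piece $(\varepsilon\Delta u,\delta bv^N_x)$, which the paper simply imports from \cite[Theorem~5]{Fra1Lin2Roo3:2008-Superconvergence}. Your Cauchy--Schwarz against the $\delta$-weighted part of the SD norm produces bounds proportional to $\delta_\cdot^{1/2}$ (e.g.\ $\varepsilon^{-1/2}\delta_x^{1/2}$ on $\Omega_x$), whereas the corresponding terms in \eqref{eq:stabilization term in uI-uN} are linear in $\delta_\cdot$ (e.g.\ $\varepsilon^{-1}\delta_x\ln^{1/2}N$). A $\delta_\cdot^{1/2}$-type bound is not dominated by a $\delta_\cdot$-type bound when $\delta_\cdot$ is small, so to recover the stated form you must instead pair $\delta_\cdot\Vert\Delta u\Vert$ against the $\varepsilon^{1/2}\vert v^N\vert_1$ part of the SD norm on those subdomains. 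This is repairable, unlike the first point, but it needs to be said explicitly.
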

\begin{proof}
We have
$$
a_{stab}(u-u^{I },v^{N})=(-\varepsilon\Delta u+b(u-u^{I })_{x}+c(u-u^{I }),\delta bv^{N}_{x}).
$$

For $(\varepsilon\Delta u, \delta bv^{N}_{x})$, the reader is  referred to \cite[Theorem 5]{Fra1Lin2Roo3:2008-Superconvergence}. Its bound is
\begin{align}
|(\varepsilon\Delta u, \delta bv^{N}_{x})|\le&   C(\varepsilon  \delta_s \ln^{1/2}N+\delta^{1/2}_s N^{-3/2})\Vert v^N \Vert_{SD}\label{eq:stab-I}\\
&+C\varepsilon^{1/4} (  \delta_y \ln N+\delta_y^{1/2}N^{-3/2}\ln^{1/2} N ) \Vert v^N \Vert_{SD}\nonumber\\
&+C(\varepsilon^{-1}\delta_x\ln^{1/2}N+\varepsilon^{-3/4} \delta_{xy}\ln N ) \Vert v^N \Vert_{SD}.\nonumber
\end{align}

We can analyze $(b(u-u^I)_x,\delta bv^N_x)$ in a similar way as in Lemma  \ref{lem: uI-uN-I} and 
deal with $b$ as in \eqref{eq:T3-part}. Then we have
\begin{align}
|(b(u-u^I)_x,\delta bv^N_x)|
\le &
C(\delta_s^{1/2}  N^{-3/2}+ \delta_y^{1/2}\varepsilon^{1/4}N^{-3/2}\ln^{3/2}N)\Vert v^N \Vert_{SD}\label{eq:stab-II}\\
&+
C(\delta_x \varepsilon^{-1}N^{-1}\ln^{3/2}N+ \delta_{xy}  \varepsilon^{-3/4}N^{-1} \ln^{2}N)
\Vert v^N \Vert_{SD}.\nonumber
\end{align}

%The bound of $(c(u-u^{I }),\delta bv^{N}_{x})$  is obtained by means of Lemma \ref{lem: uI-uN-II}.
 According to the bounds of $(b(u-u^I),v^N_x)$ in Lemma \ref{lem: uI-uN-II}, we obtain
\begin{align}
|(c(u-u^{I }),\delta bv^{N}_{x})| \le 
&C(\delta_s^{1/2} N^{-\rho}+\delta_s N^{-2} \ln^{1/2} N)\Vert v^N \Vert_{SD}\label{eq:stab-III}\\
& +\varepsilon^{1/4} (\delta_y^{1/2}N^{-\rho}\ln^{1/2} N+\delta_y N^{-2}\ln^{7/2} N)\Vert v^N \Vert_{SD}\nonumber\\
&+ (\delta_x+\delta_{xy} ) N^{-2}\ln^{5/2}N \Vert v^N \Vert_{SD}.\nonumber
\end{align}

Collecting \eqref{eq:stab-I},  \eqref{eq:stab-II} and  \eqref{eq:stab-III}, we are done.
\end{proof}

\begin{theorem}\label{eq: uI-uN varepsilon}
Let Assumption \ref{assumption-regularity} hold true. Suppose  the stabilization parameter $\delta$ satisfies
\eqref{eq: delta-K} and
\begin{equation}\label{eq:delta}
 \delta_s \le C^*N^{-1/2},
\quad
\delta_y\le C^* \varepsilon^{-1/4}N^{-3/2},
\quad
 \delta_x\le C^*\varepsilon N^{-3/2},
\quad
 \delta_{xy}\le C^* \varepsilon^{3/4}N^{-3/2},
\end{equation}
where  $C^*$ is a positive constant independent of 
$\varepsilon$ and the mesh. Then we have
$$
\Vert u^I-u^{N} \Vert_{\varepsilon}
\le
\Vert u^I-u^{N} \Vert_{SD}
\le
C N^{-3/2}\ln^{3/2}N. $$
\end{theorem}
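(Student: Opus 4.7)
The proof is a standard SDFEM supercloseness argument which combines the coercivity and Galerkin orthogonality of the method with the three technical bounds established in Lemmas \ref{lem: uI-uN-I}, \ref{lem: uI-uN-II}, \ref{lem: uI-uN-III}. The inequality $\Vert u^I-u^N\Vert_\varepsilon \le \Vert u^I-u^N\Vert_{SD}$ is immediate from the definition \eqref{eq:SD norm}, so the whole task is to bound $\Vert u^I-u^N\Vert_{SD}$.

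The plan is to set $v^N := u^N-u^I \in V^N$ and start from the coercivity \eqref{eq:SD coercivity}, which is available because the stabilization parameters in \eqref{eq:delta} are consistent with \eqref{eq: delta-K} (one should check, e.g., that $C^*N^{-1/2}\le \mu_0/(2\Vert c\Vert_{L^\infty}^2)$ for $N$ large enough). Coercivity gives
\[
\tfrac{1}{2}\Vert v^N\Vert_{SD}^2 \;\le\; a_{SD}(v^N,v^N).
\]
Using the orthogonality \eqref{eq:orthogonality of SD}, $a_{SD}(u-u^N,v^N)=0$, we rewrite
\[
a_{SD}(v^N,v^N) \;=\; a_{SD}(u^N-u^I,v^N) \;=\; a_{SD}(u-u^I,v^N),
\]
which reduces the problem to controlling $|a_{SD}(u-u^I,v^N)|$ in terms of $\Vert v^N\Vert_{SD}$. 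Splitting $a_{SD}=a_{Gal}+a_{stab}$ and further decomposing $a_{Gal}$ into its diffusion piece $\varepsilon(\nabla(u-u^I),\nabla v^N)$ and its convection-reaction piece $(b(u-u^I)_x+c(u-u^I),v^N)$, these three contributions are bounded respectively by Lemmas \ref{lem: uI-uN-I}, \ref{lem: uI-uN-II}, and \ref{lem: uI-uN-III}.

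The remaining work is to verify that each term on the right-hand sides of these three lemmas is $\le CN^{-3/2}\ln^{3/2}N\cdot\Vert v^N\Vert_{SD}$ under the assumptions $\varepsilon\le\min\{N^{-1},\ln^{-6}N\}$, $\rho=2.5$, and \eqref{eq:delta}. For Lemma \ref{lem: uI-uN-I} the bound is already in the target form. For Lemma \ref{lem: uI-uN-II}, since the upper bounds on $\delta_s,\delta_y$ in \eqref{eq:delta} may drive $\delta_s^{-1/2},\delta_y^{-1/2}$ upward, one uses instead the second branch of the minima: $A_s\le N^{1-\rho}=N^{-3/2}$ and $A_y\le \varepsilon^{1/4}N^{1-\rho}\ln^{1/2}N\le N^{-7/4}\ln^{1/2}N$. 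For Lemma \ref{lem: uI-uN-III} one plugs the specific bounds on $\delta_s,\delta_x,\delta_y,\delta_{xy}$: e.g.\ $\delta_s\varepsilon\ln^{1/2}N\le \varepsilon N^{-1/2}\ln^{1/2}N\le N^{-3/2}\ln^{1/2}N$ via $\varepsilon\le N^{-1}$, $\delta_s^{1/2}N^{-3/2}\le N^{-7/4}$, $\varepsilon^{1/4}\delta_y\ln N\le N^{-3/2}\ln N$, $\varepsilon^{-1}\delta_x\ln^{1/2}N\le N^{-3/2}\ln^{1/2}N$, and $\varepsilon^{-3/4}\delta_{xy}\ln N\le N^{-3/2}\ln N$. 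Summing all of these leaves the dominant $N^{-3/2}\ln^{3/2}N$ contributions from Lemma \ref{lem: uI-uN-I}, dividing by $\Vert v^N\Vert_{SD}$ yields the claim.

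The main obstacle is not any single estimate but the careful bookkeeping needed to see that the four parameters $\delta_s,\delta_x,\delta_y,\delta_{xy}$ in \eqref{eq:delta} are \emph{simultaneously} large enough for the stabilization residuals in Lemma \ref{lem: uI-uN-III} (which grow with $\delta$) to remain $O(N^{-3/2}\ln^{3/2}N)$, while also being essentially irrelevant inside the $\min$'s defining $A_s,A_y$ in Lemma \ref{lem: uI-uN-II} (so the $N^{1-\rho}$ branch governs). This balance is precisely what dictates the scalings $\delta_s\sim N^{-1/2}$, $\delta_y\sim \varepsilon^{-1/4}N^{-3/2}$, $\delta_x\sim\varepsilon N^{-3/2}$, $\delta_{xy}\sim\varepsilon^{3/4}N^{-3/2}$ and motivates the restriction $\varepsilon\le \ln^{-6}N$, which absorbs the $\varepsilon^{1/4}\ln^3 N$ factor inherited from the characteristic-layer contribution in Lemma \ref{lem: uI-uN-II}.
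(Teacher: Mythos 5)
Your proposal is correct and follows essentially the same route as the paper: coercivity \eqref{eq:SD coercivity} plus Galerkin orthogonality \eqref{eq:orthogonality of SD} reduce everything to $a_{SD}(u-u^I,v^N)$ with $v^N=u^I-u^N$, which is then bounded by Lemmas \ref{lem: uI-uN-I}, \ref{lem: uI-uN-II} and \ref{lem: uI-uN-III}. The paper leaves the parameter bookkeeping implicit, whereas you carry it out explicitly; your verifications under \eqref{eq:delta}, $\rho=2.5$ and $\varepsilon\le\min\{N^{-1},\ln^{-6}N\}$ are accurate.
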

\begin{proof}
Considering  the coercivity \eqref{eq:SD coercivity} and orthogonality \eqref{eq:orthogonality of SD} of $a_{SD}(\cdot, \cdot)$, we have
\begin{align*}
\frac{1}{2}\Vert  u^{I}-u^{N} \Vert^{2}_{\varepsilon} 
&\le
\frac{1}{2}\Vert  u^{I}-u^{N} \Vert^{2}_{SD}
\le
a_{SD}(u^{I}-u, u^{I}-u^{N}).
\end{align*}
Taking $v^N=u^I-u^N$ in Lemmas \ref{lem: uI-uN-I}, \ref{lem: uI-uN-II} and \ref{lem: uI-uN-III},  the proof is finished.
\end{proof}
\begin{remark}
 The  convergence order of $\Vert u^I-u^N \Vert_{\varepsilon}$ is only $3/2$, as also appears in the following numerical tests (see \S 6). Note that  this convergence order is different from one in the case of  rectangular meshes, which is almost $2$ (see \cite[Theorem 5]{Fra1Lin2Roo3:2008-Superconvergence} and  \cite[Theorem 4.5]{Styn1Tobi2:2003-SDFEM}). 
%Moreover,  analysis in our following work will show that the terms $\varepsilon((E_1-E^I_1)_x,v^N_x)_{\Omega_x}$ and $\varepsilon((E_2-E^I_2)_y,v^N_y)_{\Omega_y}$  are {\color{red}\textbf{the only source of $3/2$  order convergence}}.
\end{remark}
\begin{remark}
Theorem \ref{eq: uI-uN varepsilon} allows the construction of a simple postprocessing  as in \cite[Section 5.2]{Styn1Tobi2:2003-SDFEM}. A
local postprocessing of $u^N$ will yield a piecewise quadratic solution
$Pu^N$ for which in general $\Vert u-Pu^N \Vert_{\varepsilon}\ll \Vert u-u^N \Vert_{\varepsilon}$.
\end{remark}

%\begin{theorem}
%Let Assumption \ref{assumption-regularity} hold true. We have
%$$
%\Vert u-u^N \Vert_{\varepsilon,\Omega_s}\le
%C(\varepsilon^{1/2} N^{-1}+N^{-3/2})\ln^{3/2}N.
%$$
%\end{theorem}
%\begin{proof}
%The same analysis as in \cite[Lemma ]{} yields
%\begin{equation}\label{eq:local estaimes-varepsilon- in Omega s}
%\Vert u-u^I \Vert_{\varepsilon,\Omega_s}
%\le C(\varepsilon^{1/2}N^{-1}+N^{-2}).
%\end{equation}
%
%Combing \eqref{eq:local estaimes-varepsilon- in Omega s} and Theorem \ref{eq: uI-uN varepsilon}, we are done.
%\end{proof}

%%%%%%%%%%%%%%%%%%%%%%%%%%%%%%%%%%%%%%%%%%%%%%%%%%%%%%%%%%%%%%%%%%%%%%%%%%%%%%%%%%%%%%%%%%%%%%%%%%%%%%%%%%%%
%
%
%%%%%%%%%%%%%%%%%%%%%%%%%%%%%%%%%%%%%%%%%%%%%%%%%%%%%%%%%%%%%%%%%%%%%%%%%%%%%%%%%%%%%%%%%%%%%%%%%%%%%%%%%%%%%
\section{Supercloseness property on hybrid meshes}\label{sec: postprocess}
%Many numerical experiments show that for  SDFEMs the use of linear elements inside the
%boundary layer region does not yield as good results as the use of bilinears and should therefore be avoided.

In this section, we will study an interesting problem which has been discussed in \cite{Roos:2006-Superconvergence} and \cite{Fra1Lin2Roo3:2008-Superconvergence}:  \emph{Where the use of bilinears has to be strongly recommended so that the bound $\Vert u^I-u^N\Vert_{SD}$ is of almost order 2}?
 %%\cite{Linb1Styn2:2001-Numerical} and \cite{Roos:2006-Superconvergence} for problems with exponential layers.
Careful observations of the proofs of Lemmas \ref{lem: uI-uN-I}, \ref{lem: uI-uN-II} and \ref{lem: uI-uN-III},  we find
that  in the case of triangles, only the term $\mr{III}_5$ in Lemma \ref{lem: uI-uN-I} and the stabilization parameter $\delta$ limit the order of 
$\Vert u^I-u^N\Vert_{SD}$. 
\begin{theorem}\label{theorem:superconvergence-Hybrid mesh-CL}
Suppose that Assumption \ref{assumption-regularity} holds true. Take $\delta_s=C^*N^{-1}$,  $\delta_y \le C^*\max\{N^{-3/2}, \varepsilon^{-1/4}N^{-2}\}$ and $\delta_x=\delta_{xy}=0$
where $C^*$ is a positive constant independent of $\varepsilon$ and the mesh such that $\delta$ satisfies \eqref{eq: delta-K}. For problems \eqref{eq:model problem}, if we use bilinear elements in $\Omega_x$ and linear elements in $\Omega\setminus\Omega_x$, we have
\begin{equation}
\Vert u^I-u^N \Vert_{SD}\le C(\varepsilon^{1/4}N^{-3/2}\ln^{3/2} N+N^{-2} \ln^2 N).
\end{equation}
\end{theorem}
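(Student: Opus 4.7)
The plan is to follow the same coercivity-and-orthogonality framework as Theorem \ref{eq: uI-uN varepsilon}, starting from $\tfrac{1}{2}\Vert u^I-u^N\Vert_{SD}^2 \le a_{SD}(u^I-u,\,u^I-u^N)$ and bounding $a_{SD}(u-u^I,v^N)$ with $v^N=u^I-u^N$ term by term. Since linear elements are retained on $\Omega_s\cup\Omega_y\cup\Omega_{xy}$, every estimate in Lemmas \ref{lem: uI-uN-I}--\ref{lem: uI-uN-III} whose integration domain lies inside these three regions is inherited verbatim; the work is confined to $\Omega_x$, where bilinear elements on rectangles replace linears on triangles, and to the stabilization residual, where the new $\delta$ values take effect.

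Per the observation preceding the theorem, the only sources that block the order-two rate in the purely triangular case are the term $\mathrm{III}_5=((E_1-E_1^I)_x,v^N_x)_{\Omega_x}$ of Lemma \ref{lem: uI-uN-I}, previously bounded by $C\varepsilon^{-1}N^{-3/2}\ln^{3/2}N$, and the stabilization contributions weighted by $\varepsilon^{-1}\delta_x$ and $\varepsilon^{-3/4}\delta_{xy}$ in Lemma \ref{lem: uI-uN-III}. Taking $\delta_x=\delta_{xy}=0$ kills the latter outright. For the former, I would invoke the rectangular integral identity of \cite{Lin1Yan2Zho3:1991-rectangle} on each rectangle in $\Omega_x$; together with the derivative bound for $\partial_x^2 E_1$ from Assumption \ref{assumption-regularity} and the mesh sizes $h_x=O(\varepsilon N^{-1}\ln N)$, $H_y=O(N^{-1})$ on $\Omega_x$, this yields a contribution of order $N^{-2}\ln^2 N\,\Vert v^N\Vert_{SD}$, essentially as in \cite[Theorem 5]{Fra1Lin2Roo3:2008-Superconvergence}. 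The companion terms $\mathrm{III}_1$--$\mathrm{III}_4$ on $\Omega_x$ are already harmless: they involve either the smooth $S$, handled by standard bilinear interpolation bounds, or the layer functions $E_2,E_{12}$, which are exponentially small throughout $\Omega_x$.

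Substituting $\delta_s=C^*N^{-1}$ and the stated $\delta_y$ into the surviving pieces of Lemma \ref{lem: uI-uN-III} shows that $\delta_s\varepsilon\ln^{1/2}N$, $\delta_s^{1/2}N^{-3/2}$, $\varepsilon^{1/4}\delta_y\ln N$, and $\varepsilon^{1/4}\delta_y^{1/2}N^{-3/2}\ln^{1/2}N$ are all controlled by $C(\varepsilon^{1/4}N^{-3/2}\ln N+N^{-2}\ln N)$; combining with the Galerkin bounds from Lemmas \ref{lem: uI-uN-I}--\ref{lem: uI-uN-II} (unaltered off $\Omega_x$) and the improved $\mathrm{III}_5$ bound produces the claimed $C(\varepsilon^{1/4}N^{-3/2}\ln^{3/2}N+N^{-2}\ln^2 N)$. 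The main technical obstacle is the bookkeeping at the $\Omega_x/\Omega_{xy}$ interface: both $u^I$ and $v^N$ are nodally continuous across it, so no additional boundary correction enters the rectangular identity, but one still has to verify that no edge contributions survive summation over $\Omega_x$. This follows from the exponential smallness of $E_1$ at $x=1-\lambda_x$, which absorbs any residual trace into an $N^{-\rho}$ remainder and leaves the stated bound intact.
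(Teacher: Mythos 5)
Your proposal is correct and follows essentially the same route as the paper: split $a_{SD}(u-u^I,v^N)$ into its contributions on $\Omega_x$ and on $\Omega\setminus\Omega_x$, reuse Lemmas \ref{lem: uI-uN-I}--\ref{lem: uI-uN-III} off $\Omega_x$ with the stated $\delta_s$, $\delta_y$ and $\delta_x=\delta_{xy}=0$ (using $\varepsilon\ln^6N\le 1$), and recover order two on $\Omega_x$ from the bilinear/rectangular superconvergence machinery. The only cosmetic difference is that the paper outsources the entire $\Omega_x$ bound $|a_{SD;\Omega_x}(u-u^I,v^N)|\le CN^{-2}\ln^2N\,\Vert v^N\Vert_{SD}$ to \cite[Theorem 5]{Fran1Linb2:2007-Superconvergence} rather than re-deriving it from the integral identities of \cite{Lin1Yan2Zho3:1991-rectangle} as you sketch.
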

\begin{proof}
Note that we use bilinear elements in $\Omega_x$ and linear elements in $\Omega\setminus\Omega_x$. Now we consider
$$
a_{SD}(u-u^I,v^N)=a_{SD; \Omega\setminus\Omega_x}(u-u^I,v^N)+a_{SD;\Omega_x}(u-u^I,v^N)
$$
where $a_{SD; \Omega_x}(\cdot,\cdot)$  and $a_{SD; \Omega\setminus\Omega_x}(\cdot,\cdot)$ mean the integrations in $a_{SD}(\cdot,\cdot)$ are restricted to $\Omega_x$ and $\Omega\setminus \Omega_x$ respectively.

According to Lemmas \ref{lem: uI-uN-I}, \ref{lem: uI-uN-II} and \ref{lem: uI-uN-III},
we have
\begin{align*}
|a_{SD; \Omega\setminus\Omega_x}(u-u^I,v^N)|
\le&
C\varepsilon^{1/4} N^{-3/2} \ln^{3/2} N\Vert v^N \Vert_{SD}\\
&+C(A_s+A_y+
(1+\varepsilon^{1/4}\ln^3 N)
N^{-2}\ln^{1/2}N)  ) \Vert v^N \Vert_{SD}\\
&+C (\delta_s\varepsilon\ln^{1/2}N+\delta^{1/2}_s  N^{-3/2} )   \Vert v^N\Vert_{SD}\\
&+C\varepsilon^{1/4}(\delta_y+\delta^{1/2}_yN^{-3/2} \ln^{1/2} N)\ln N \Vert v^N\Vert_{SD}\\
&+C\varepsilon^{-3/4}\delta_{xy}\ln N\Vert v^N\Vert_{SD},
\end{align*}
where $A_s$ and $A_y$ are defined as in Lemma \ref{lem: uI-uN-II}. 
Considering  the definitions of $\delta_s$,  $\delta_y$ and 
$\delta_{xy}$ and $\varepsilon \ln^6 N \le 1$,  we obtain
\begin{equation}\label{eq:hybrid-I}
|a_{SD; \Omega\setminus\Omega_x}(u-u^I,v^N)|
\le
C(\varepsilon^{1/4} N^{-3/2} \ln^{3/2} N+N^{-2}\ln N) \Vert v^N \Vert_{SD}.
\end{equation}

Note that $\delta_x=0$. According to \cite[Theorem 5]{Fran1Linb2:2007-Superconvergence}, we have
\begin{equation}\label{eq:hybrid-II}
|a_{SD; \Omega_x}(u-u^I,v^N)|
\le
CN^{-2}\ln^{2}N \Vert v^N \Vert_{SD}.
\end{equation}

%
% Note that  the term  $\mr{III}_5$ in Lemma \ref{lem: uI-uN-I} is 
%estimates of $a_{SD}(u-u^I,v^N)$ for all $v^N\in V^N$ are similar to Lemmas \ref{lem: uI-uN-I}, \ref{lem: uI-uN-II} and \ref{lem: uI-uN-III}, 
%except the term
% $\mr{III}_5$ in Lemma \ref{lem: uI-uN-I}, which bound could be found in \cite{Fran1Linb2:2007-Superconvergence}, i.e.
%$$
%|\left(  (E_{1}-E^I_{1})_{x}, v^{N}_{x}\right)_{\Omega_{x} }|\le C \varepsilon^{-1}N^{-2}\ln ^2 N \Vert v^N \Vert_{SD}.
%$$
Collecting \eqref{eq:hybrid-I} and \eqref{eq:hybrid-II},  we are done.
\end{proof}
\begin{remark}
%If we set $\delta_y=N^{-3/2}$ on this hybrid mesh,  we could obtain higher order bounds, however it seems that there are lack of numerical stability in computed solutions.

Once we use linear elements in $\Omega_x$,  similar analysis   shows that $\Vert u^I-u^N \Vert_{SD}$ is of almost order $3/2$ again. Theorem \ref{theorem:superconvergence-Hybrid mesh-CL} shows that bilinear elements  should be recommended  for exponential layers to preserve $2$nd convergence of $\Vert u^I-u^N\Vert_{SD}$, and in the remained domain linear or bilinear elements could be used. 
\end{remark}
%%%%%%%%%%%%%%%%%%%%%%%%%%%%%%%%%%%%%%%%%%%%%%%%%%%%%%%%%%%%%%%%%%%%%%%%%%%%%%%%%%%%%%%%%%%%%%%%%%%%%%%%%%%%
%
%
%%%%%%%%%%%%%%%%%%%%%%%%%%%%%%%%%%%%%%%%%%%%%%%%%%%%%%%%%%%%%%%%%%%%%%%%%%%%%%%%%%%%%%%%%%%%%%%%%%%%%%%%%%%%%

%%%%%%%%%%%%%%%%%%%%%%%%%%%%%%%%%%%%%%%%%%%%%%%%%%%%%%%%%%%%%%%%%%%%%%%%%%%%%%%%%%%%%%%%%%%%%%%%%%%%%%%%%%%%
%
%
%%%%%%%%%%%%%%%%%%%%%%%%%%%%%%%%%%%%%%%%%%%%%%%%%%%%%%%%%%%%%%%%%%%%%%%%%%%%%%%%%%%%%%%%%%%%%%%%%%%%%%%%%%%%%
\section{Numerical results}
\noindent
In this section we give numerical results that appear to support our theoretical results. Errors and convergence rates of $u^I-u^N$ on Shishkin triangular meshes and hybrid meshes are presented. For the computations we set
\begin{equation*}
\delta_s = N^{-1},
\quad
\delta_y= N^{-3/2},
\quad
 \delta_x= \delta_{xy}=0.
\end{equation*}
All calculations were carried out by using Intel Visual Fortran 11. The discrete problems
were solved by the nonsymmetric iterative solver GMRES(c.f. e.g.,\cite{Ben1Gol2Lie3:2005-Numerical,Saad1Schu2:1986-GMRES}).

We will  illustrate our results  by computing errors and convergence orders for
the following boundary value problems
\begin{align*}
-\varepsilon\Delta u+(2-x)u_{x}+1.5u&=f(x,y)\quad&&\text{in $\Omega=(0,1)^{2}$},\\
u&=0&& \text{on $\partial\Omega$}
\end{align*}
where the right-hand side  $f$ is chosen such that
\begin{equation*}
u(x,y)=\left( \sin\frac{\pi x}{2}-\frac{e^{-(1-x)/\varepsilon }-e^{-1/\varepsilon }  } {1-e^{-1/\varepsilon } }\right)
\frac{ (1-e^{-y/\sqrt{\varepsilon} }) (1-e^{-(1-y)/\sqrt{\varepsilon}  })  } { 1-e^{ -1/\sqrt{\varepsilon} } } 
\end{equation*}
is the exact solution.

The errors in Tables \ref{table: triangle}--\ref{table: rectangle} are measured as follows
\begin{align*}
e^{N}_{SD}:&=\underset{\varepsilon=10^{-6},10^{-8},\ldots,10^{-16} } {\max} \left( \sum_{K\subset \Omega}\Vert u^I-u^N \Vert^{2}_{SD,K}\right)^{1/2},\\
e^{N}_{\varepsilon}:&=\underset{\varepsilon=10^{-6},10^{-8},\ldots,10^{-16} } {\max} \left( \sum_{K\subset \Omega}\Vert u^I-u^N \Vert^{2}_{\varepsilon,K}\right)^{1/2}.
\end{align*}
 The corresponding rates of convergence $p^{N}$ are computed from the formula
\begin{equation}\label{eq: convergence order formula}
    p^{N}=\frac{\ln e^{N}-\ln e^{2N}}{\ln2},
\end{equation}
where $e^{N}$ could be $e^{N}_{SD}$ or $e^{N}_{\varepsilon}$.

\begin{table}[htp]
\caption{Errors and convergence orders on Shishkin triangular meshes}
\footnotesize
\begin{tabular*}{\textwidth}{@{\extracolsep{\fill}}ccccc}
\hline
$N$   & $ \Vert u^I-u^{N}\Vert_{\varepsilon}$   & Rate   & $ \Vert u^I-u^{N}\Vert_{SD}$    & Rate\\
\hline
12   &$6.008\times10^{-2}$  & $1.14$    &$6.019\times10^{-2}$      &    $1.14$\\
%\hline
24  &$2.727\times10^{-2}$   &$1.27$  &$2.729\times10^{-2}$      &    $1.27$\\
%\hline
48  &$1.134\times10^{-2}$   &$1.33$   &$1.134\times10^{-2}$     &    $1.33$\\
%\hline
96  &$4.511\times10^{-3}$   &$1.36$&$4.511\times10^{-3}$       &    $1.36$\\
%\hline
192  &$1.757\times10^{-3}$   &$1.37$&$1.758\times10^{-3}$      &    $1.37$\\
384  &$6.788\times10^{-4}$   &$---$&$6.788\times10^{-4}$            &$---$\\
\hline
\end{tabular*}
\label{table: triangle}
\end{table}
In Table \ref{table: triangle}, the errors and convergence rates for $\Vert u^I-u^{N}\Vert_{\varepsilon}$ and $\Vert u^I-u^{N} \Vert_{SD}$ on the Shishkin triangular mesh are displayed.  We observe $\varepsilon$-independence of the errors and convergence rates. These numerical results support Theorem \ref{eq: uI-uN varepsilon}: almost $3/2$ order
 convergence for $\Vert u^I-u^{N}\Vert_{\varepsilon}$ and $\Vert u^I-u^{N} \Vert_{SD}$ on Shishkin triangular meshes. Also, Fig. \ref{fig:log-log chart} shows that the behavior of $\Vert u^I-u^{N} \Vert_{SD}$ is similar to $N^{-3/2}\ln^{3/4}N$ in the case of $\varepsilon=10^{-6},10^{-8},\cdots,10^{-16}$, as to some extent   supports Theorem \ref{eq: uI-uN varepsilon}.  
% and  $3/2$ maybe is the optimal convergence order  for SDFEM  

\begin{figure}[htp]
\centering
\includegraphics[width=0.6\textwidth]{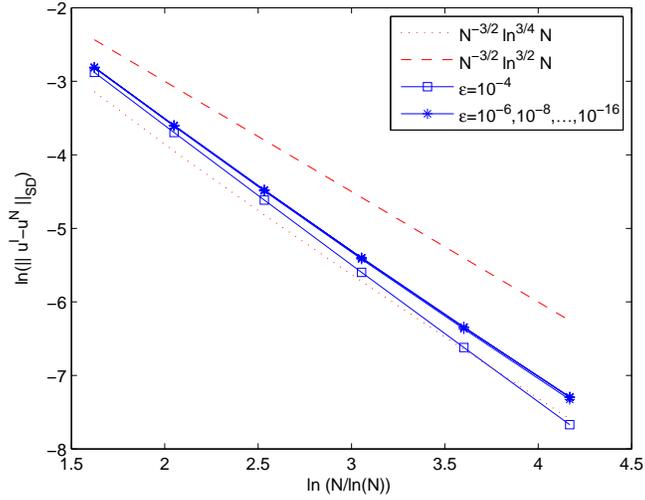}
\caption{Error $\Vert u^I-u^N \Vert_{SD}$ on the Shishkin triangular mesh.}
\label{fig:log-log chart}
\end{figure}

Tables \ref{table: hybrid I}, \ref{table: hybrid II}  and \ref{table: rectangle} present errors and convergence orders of  $\Vert u^I-u^{N}\Vert_{\varepsilon}$ and $\Vert u^I-u^{N} \Vert_{SD}$ on the hybrid mesh I, II and Shishkin rectangular mesh respectively. Among them, the hybrid mesh I consists of rectangles in $\Omega_x$ and triangles in $\Omega\setminus\Omega_x$, while the hybrid mesh II consists of triangles in $\Omega_x$ and rectangles in $\Omega\setminus\Omega_x$. Numerical results in Table \ref{table: hybrid I}  are similar with ones in Table \ref{table: rectangle} and support Theorem \ref{theorem:superconvergence-Hybrid mesh-CL}:
almost $2$ order
 convergence for $\Vert u^I-u^{N}\Vert_{\varepsilon}$ and $\Vert u^I-u^{N} \Vert_{SD}$. Besides, if we use linear elements in $\Omega_x$ and bilinear elements elsewhere, Table \ref{table: hybrid II} presents almost $3/2$  order convergence again and shows similarity  with Table \ref{table: triangle}. 

\begin{table}[htp]
\caption{Errors and convergence orders on the hybrid mesh I}
\footnotesize
\begin{tabular*}{\textwidth}{@{\extracolsep{\fill}}ccccc}
\hline
$N$   & $ \Vert u^I-u^{N}\Vert_{\varepsilon}$   & Rate   & $ \Vert u^I-u^{N}\Vert_{SD}$    & Rate\\
\hline
12   &$4.226\times10^{-2}$  & $1.24$    &$4.251\times10^{-2}$      &    $1.25$\\
%\hline
24  &$1.786\times10^{-2}$   &$1.40$  &$1.789\times10^{-2}$      &    $1.40$\\
%\hline
48  &$6.753\times10^{-3}$   &$1.51$   &$6.758\times10^{-3}$     &    $1.51$\\
%\hline
96  &$2.368\times10^{-3}$   &$1.59$&$2.369\times10^{-3}$       &    $1.59$\\
%\hline
192  &$7.886\times10^{-4}$   &$1.64$&$7.887\times10^{-4}$      &    $1.64$\\
384  &$2.531\times10^{-4}$   &$---$&$2.531\times10^{-4}$            &$---$\\
\hline
\end{tabular*}
\label{table: hybrid I}
\end{table}
%{\color{blue}\textbf{Note that on the hybrid mesh I and the rectangle mesh, for fixed $N$ errors decrease as $\varepsilon$ decreases. While on the hybrid mesh II and the triangle mesh, for fixed $N$ errors increase as $\varepsilon$ decreases}}.

\begin{table}[htp]
\caption{Errors and convergence orders on the hybrid mesh II}
\footnotesize
\begin{tabular*}{\textwidth}{@{\extracolsep{\fill}}ccccc}
\hline
$N$   & $ \Vert u^I-u^{N}\Vert_{\varepsilon}$   & Rate   & $ \Vert u^I-u^{N}\Vert_{SD}$    & Rate\\
\hline
12   &$6.122\times10^{-2}$  & $1.15$    &$6.136\times10^{-2}$      &    $1.16$\\
%\hline
24  &$2.750\times10^{-2}$   &$1.27$  &$2.752\times10^{-2}$      &    $1.27$\\
%\hline
48  &$1.139\times10^{-2}$   &$1.33$   &$1.139\times10^{-2}$     &    $1.33$\\
%\hline
96  &$4.519\times10^{-3}$   &$1.36$&$4.519\times10^{-3}$       &    $1.36$\\
%\hline
192  &$1.759\times10^{-3}$   &$1.37$&$1.759\times10^{-3}$      &    $1.37$\\
384  &$6.791\times10^{-4}$   &$---$&$6.791\times10^{-4}$            &$---$\\
\hline
\end{tabular*}
\label{table: hybrid II}
\end{table}

\begin{table}
\caption{Errors and convergence orders on Shishkin rectangular mesh}
\footnotesize
\begin{tabular*}{\textwidth}{@{\extracolsep{\fill}}ccccc}
\hline
$N$   & $ \Vert u^I-u^{N}\Vert_{\varepsilon}$   & Rate   & $ \Vert u^I-u^{N}\Vert_{SD}$    & Rate\\
\hline
12   &$4.230\times10^{-2}$  & $1.24$    &$4.242\times10^{-2}$      &    $1.25$\\
%\hline
24  &$1.787\times10^{-2}$   &$1.41$  &$1.789\times10^{-2}$      &    $1.41$\\
%\hline
48  &$6.745\times10^{-3}$   &$1.51$   &$6.749\times10^{-3}$     &    $1.51$\\
%\hline
96  &$2.361\times10^{-3}$   &$1.59$&$2.362\times10^{-3}$       &    $1.59$\\
%\hline
192  &$7.852\times10^{-4}$   &$1.64$&$7.854\times10^{-4}$      &    $1.64$\\
384  &$2.517\times10^{-4}$   &$---$&$2.517\times10^{-4}$            &$---$\\
\hline
\end{tabular*}
\label{table: rectangle}
\end{table}

\end{document}